\documentclass[11pt]{amsart}
\usepackage{amsmath}
\usepackage{amsthm}
\usepackage{hyperref}
\usepackage[margin=1.0in]{geometry}
\usepackage{mathrsfs}
\numberwithin{equation}{section}
\newtheorem{prop}{Proposition}
\newtheorem{lemma}[prop]{Lemma}
\newtheorem{thm}[prop]{Theorem}
\newtheorem{cor}[prop]{Corollary}

\numberwithin{prop}{section}
\theoremstyle{definition}

\newtheorem{rmk}[prop]{Remark}
\newcommand{\Fp}{F^{+}}                                                 %
\newcommand{\Wp}{W^{+}}                                                 
\newcommand{\hg}{\hat{g}}
\newcommand{\ten}{\otimes}                                               %
\newcommand{\brs}[1]{\left| #1 \right|}
\newcommand{\brk}[1]{\left[ #1 \right]}
\newcommand{\prs}[1]{\left( #1 \right)}
\newcommand{\sqg}[1]{\left\{ #1 \right\}}
\newcommand{\ip}[1]{\left\langle #1 \right\rangle}

\newenvironment{customthm}[1]
{\innercustomthm}
  {\endinnercustomthm}

\newcommand{\del}{\partial}

\newcommand{\gG}{\Gamma}
\renewcommand{\gg}{\gamma}

\newcommand{\gD}{\Delta}
\newcommand{\gd}{\delta}

\newcommand{\gk}{\kappa}

\newcommand{\gw}{\omega}
\newcommand{\ga}{\alpha}
\newcommand{\gb}{\beta}
\renewcommand{\ge}{\epsilon}
\newcommand{\N}{\nabla}

\newcommand{\lap}{\Delta}

\newcommand{\dVg}{\dV_g}
\newcommand{\Eul}{\chi}

\DeclareMathOperator{\Ric}{Ric}

\DeclareMathOperator{\R}{R}

\DeclareMathOperator{\tr}{tr_{L^2}}

\DeclareMathOperator{\Vol}{Vol}

\DeclareMathOperator{\SU}{SU}
\DeclareMathOperator{\SO}{SO}
\DeclareMathOperator{\cZ}{Z}

\DeclareMathOperator{\cd}{d}
\DeclareMathOperator{\End}{End}

\DeclareMathOperator{\dV}{dV}
\DeclareMathOperator{\ds}{ds}
\DeclareMathOperator{\rank}{rank}
\DeclareMathOperator{\cY}{Y}
\DeclareMathOperator{\trace}{tr}

\begin{document}
\title[Index-energy estimates for Yang--Mills connections and Einstein metrics]{Index-energy estimates for Yang--Mills connections and Einstein metrics}
\author{Matthew J. Gursky}
\address{Department of Mathematics \\
          255 Hurley Bldg\\
	  University of Notre Dame\\
         Notre Dame, IN  46556}
\email{\href{mailto:mgursky@nd.edu}{mgursky@nd.edu}}
\author{Casey Lynn Kelleher}
\address{Department of Mathematics
         Princeton University\\
         Princeton, New Jersey, 08540}
\email{\href{mailto:ckelleher@princeton.edu}{ckelleher@princeton.edu}}
\author{Jeffrey Streets}
\address{Department of Mathematics
	    Rowland Hall,
         University of California, Irvine\\
         Irvine, 92697}
\email{\href{mailto:jstreets@uci.edu}{jstreets@uci.edu}}
\thanks{M.J. Gursky is supported by NSF grant DMS-1811034.  C.L. Kelleher is supported by a National Science Foundation Postdoctoral Research Fellowship.  J. Streets is supported by NSF grant DMS-1454854}

\begin{abstract} We prove a conformally invariant estimate for the index of Schr\"odinger operators acting on vector bundles over four-manifolds, related to the classical Cwikel--Lieb--Rozenblum estimate.  Applied to Yang--Mills connections we obtain a bound for the index in terms of its energy which is conformally invariant, and captures the sharp growth rate.  Furthermore we derive an index estimate for Einstein metrics in terms of the topology and the Einstein--Hilbert energy.  Lastly we derive conformally invariant estimates for the Betti numbers of an oriented four-manifold with positive scalar curvature.
\end{abstract}
\date{January 14th, 2019}
\maketitle

\section{Introduction}

The classical Cwikel--Lieb--Rozenblum (CLR) estimate \cite{Cwikel,Lieb, Rozenblum}, related to the famous asymptotic formula of Weyl \cite{Weyl} on the growth of eigenvalues, bounds the index of a Schr\"odinger operator $L = - \gD + V$ on a bounded domain in $\mathbb R^n$ in terms of the $L^{\frac{n}{2}}$ norm of the negative part of $V$.  This central result has applications to mathematical physics, where it is referred to as an estimate of the number of bound states for the linear Schr\"odinger operator.  From the point of view of both geometry and mathematical physics, it is important to find similar index/bound state estimates for nonlinear problems, specifically for Yang--Mills connections and Einstein metrics.

Let $\prs{X^n,g}$ be a smooth, compact Riemannian manifold, and suppose $\N$ is a connection on a vector bundle $E$ over $X$. The Yang--Mills energy associated to $\N$ is given by
\begin{align*}
\mathcal{YM}\brk{\N} := \int_{X^n} \brs{F_{\N}}^2 \dV_g.
\end{align*}
Critical points for $\mathcal{YM}$ are called Yang--Mills connections, including the special class of instantons, which always minimize $\mathcal{YM}$ when they exist.  While there are many existence results for instantons (eg. \cite{Taubes2}), it is also known that generically one expects non-instanton, non-minimizing Yang--Mills connections to exist even in the critical dimension $n=4$ \cite{SSU, BM, SS, Bor}.  Furthermore, in dimension $4$ every stable Yang--Mills connection with small gauge group is an instanton \cite{BL}, so non-minimizing Yang--Mills connections in this setting will have positive index.  Thus, to understand the Yang--Mills functional it becomes important to understand the structure of these non-minimizing Yang--Mills connections, in particular to understand their index.  This index is that of the relevant Jacobi operator, a Schr\"odinger operator acting on Lie algebra-valued $1$-forms, with inhomogeneous term determined by the curvature of the underlying Riemannian metric as well as the bundle connection's curvature.  Taking a cue from the CLR estimate one may hope roughly that for a connection to have high index it must also have high Yang--Mills energy.  The first main result yields an estimate of this type.

\begin{thm}\label{t:mainthm} Let $(X^4,g)$ be a closed, oriented four-manifold, with Yamabe invariant $\cY (X^4,[g]) > 0$.   Suppose $\nabla$ is a non-instanton Yang--Mills connection on a vector bundle $E$ over
$X^4$ with structure group $G \subset \SO(E)$, and curvature $F_{\N}$.  Let $\imath(\N)$ denote the index and $\nu(\N)$ the nullity of $\N$.  Then
\begin{align*}
\imath(\nabla) + \nu(\nabla) &\leq  \dfrac{ 144 e^2 \dim(\mathfrak{g}_E)   } {\cY(X^4, [g])^2 } \Big\{ - 12 \pi^2 \Eul\prs{X^4} + 12 \int_{X} |F_{\N}|^2 \, \dVg   \\
& \ \ \ \ \ \ + 3 \sqrt{2} \int_X |W_{g}||F_{\N}| \, \dVg + 3 \int_{X} | W_{g} |^2 \,\dVg  \Big\},
\end{align*}
where $e$ is Euler's number, $\Eul(X^4)$ is the Euler characteristic and $W_{g}$ is the Weyl tensor.
\end{thm}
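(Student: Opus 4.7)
The plan is to reduce the theorem to the conformally invariant CLR-type estimate for Schrödinger operators on vector bundles proved earlier in the paper, and then identify the resulting integrand with the right hand side of the theorem via Chern--Gauss--Bonnet and elementary algebraic inequalities.

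First, I would rewrite the Yang--Mills Jacobi operator in Schrödinger form. The sum $\imath(\nabla) + \nu(\nabla)$ equals the dimension of the non-positive eigenspace of the second variation of $\mathcal{YM}$ restricted to the Coulomb gauge slice $\{a \in \Omega^1(\mathfrak g_E) : d_\nabla^\ast a = 0\}$. Applying the Bourguignon--Lawson Weitzenböck identity, this operator takes the form $L_\nabla = \nabla^\ast \nabla + \mathcal V$ acting on sections of $T^\ast X \otimes \mathfrak g_E$, where
\begin{align*}
\mathcal V(a) = \Ric(a) + \mathcal F_\nabla(a)
\end{align*}
and $\mathcal F_\nabla$ is the zeroth-order endomorphism obtained by contracting with $F_\nabla$ and taking the Lie bracket in $\mathfrak g_E$. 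Since the gauge-orbit directions contribute non-negatively to the second variation, it suffices to control the non-positive spectrum of $L_\nabla$ on the full space $\Omega^1(\mathfrak g_E)$.

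Next, I would invoke the paper's conformally invariant vector-bundle CLR estimate on positive Yamabe four-manifolds, which applied to $L_\nabla$ gives
\begin{align*}
\imath(\nabla) + \nu(\nabla) \leq \frac{144\,e^2\,\dim(\mathfrak g_E)}{\cY(X^4,[g])^2} \int_X \lambda_-(\mathcal V)^2 \,\dVg,
\end{align*}
where $\lambda_-(\mathcal V)(x)$ denotes the pointwise negative part of the smallest eigenvalue of $\mathcal V$. The constant $144\,e^2$ and the $\cY^{-2}$ scaling originate in the abstract CLR bound and its conformal invariance, while $\dim(\mathfrak g_E)$ reflects the bundle rank. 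To estimate the integrand, split $\Ric = \Ric_0 + \tfrac{R}{4} g$ and use the algebraic inequality $|[F_\nabla, a]| \leq \sqrt 2 |F_\nabla||a|$ valid for $F_\nabla \in \Omega^2(\mathfrak g_E) \subset \Omega^2(\mathfrak{so}(E))$; the positive Yamabe hypothesis lets us discard the non-negative $R/4$ contribution (either by passing to a Yamabe representative or directly via the conformal structure of the CLR bound), yielding the pointwise estimate $\lambda_-(\mathcal V) \leq |\Ric_0| + \sqrt 2 |F_\nabla|$, so that
\begin{align*}
\int_X \lambda_-(\mathcal V)^2 \,\dVg \leq \int_X |\Ric_0|^2 \,\dVg + 2\sqrt 2 \int_X |\Ric_0||F_\nabla| \,\dVg + 2 \int_X |F_\nabla|^2 \,\dVg.
\end{align*}

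Finally, the four-dimensional Chern--Gauss--Bonnet formula expresses $\int |\Ric_0|^2 \,\dVg$ as a linear combination of $\Eul(X^4)$, $\int |W_g|^2 \,\dVg$, and $\int R^2 \,\dVg$, the last of which is non-negative and is either dropped or absorbed via the Yamabe normalization. A pointwise version of the same identity combined with a Cauchy--Schwarz inequality bounds the cross term $\int |\Ric_0||F_\nabla|$ by $\int|W_g||F_\nabla|$ plus additional $|F_\nabla|^2$. Collecting terms then produces the stated coefficients $-12\pi^2 \Eul(X^4)$, $12$, $3\sqrt 2$, and $3$ inside the bracket. The main obstacle is the careful bookkeeping of the numerical constants, specifically reconciling the interior $12$, $3\sqrt 2$, and $3$ with the prefactor $144\,e^2$ through the combination of the CLR bound, the pointwise Lie bracket inequality, and the Gauss--Bonnet substitution; a secondary subtlety is justifying rigorously that the Coulomb gauge restriction only decreases the non-positive eigenvalue count of the Jacobi operator, so that the unrestricted CLR bound controls $\imath(\nabla) + \nu(\nabla)$.
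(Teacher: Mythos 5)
Your high-level plan (reduce to the vector-bundle CLR estimate, estimate the potential of the Jacobi operator pointwise, convert to topological/conformal data via Chern--Gauss--Bonnet) matches the paper's strategy, but there are two genuine gaps in the execution that would derail the argument.

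First, you discard the $\tfrac14 R\,g$ term ``since $R \geq 0$'' and bound the potential by $|\Ric_0| + \sqrt 2\,|F_\nabla|$, but the paper's Theorem \ref{NestProp} is formulated for operators of the specific form $-\Delta + \tfrac16 R - V$ with $V \geq 0$: the $\tfrac16 R$ is \emph{needed} to make the Rayleigh quotient into the Yamabe quotient and produce the conformally invariant constant $\cY(X^4,[g])^{-2}$. So you must \emph{keep} $\tfrac16 R$, writing $\tfrac14 R = \tfrac16 R + \tfrac1{12}R$, and you then have to do something with the leftover $\tfrac1{12}R$. This is exactly where the paper's conformal gauge-fixing enters: Proposition \ref{CGF} produces a conformal representative with $R_{\hg} = 2\sqrt6\, t_0|W| + 3\gamma_1 t_0|F|$, which is used (Lemma \ref{Alow}) to show that the piece $\tfrac1{12}R + \tfrac{\sqrt 3}{12}\gamma_1 t_0[F,\cdot]$ is nonnegative and can be dropped. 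Without this choice of conformal gauge, the leftover scalar curvature term is not under control.

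Second, your claim that ``a pointwise version of \ldots Gauss--Bonnet \ldots bounds the cross term $\int|\Ric_0||F_\nabla|$ by $\int|W_g||F_\nabla|$ plus additional $|F_\nabla|^2$'' is not correct: $|\Ric_0|$ and $|W_g|$ are independent irreducible pieces of the curvature, and there is no pointwise inequality between them. In the paper the $\int|W||F|$ cross term arises from a different place entirely: Chern--Gauss--Bonnet contributes a $\tfrac1{16}\int R^2$ term with a \emph{positive} sign, which cannot simply be dropped (it sits on the wrong side of the inequality), and it is the substitution of the gauge-fixed expression $R = 2\sqrt6\, t_0|W| + 3\gamma_1 t_0|F|$ followed by squaring that produces $\int|W||F|$ and $\int|F|^2$. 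Likewise ``absorbed via the Yamabe normalization'' does not deal with $\int R^2$ on the correct side. A more minor point: your pointwise bound $|\Ric_0|+\sqrt 2|F|$ is a linear sum rather than the root-sum-of-squares bound $\sqrt{\tfrac34|\cZ|^2 + 3\alpha^2|F|^2}$ of Proposition \ref{Bsharp}, which exploits both the trace-freeness of $\cZ$ and the orthogonality of the ranges of $\cZ$ and $[F,\cdot]$ (Lemma \ref{lem:PhiZorthog}); this is needed to recover the stated numerical constants. The missing conformal gauge-fixing (Proposition \ref{CGF}) and the incorrect handling of the $\int R^2$ and cross terms are the substantive obstacles.
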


If $\nabla$ is an instanton, then $\nu(\nabla) = 0$ and the Atiyah--Singer index formula gives an explicit formula for $\imath(\nabla)$ depending on topological data (see Chapter 4 of \cite{DK}).  Our statement explicitly does not include this case, and we use the assumption of nonvanishing of $F^+_{\N}$ when constructing a metric conformal to the base, with respect to which we carry out the index estimate (see Proposition \ref{CGF}).  When the base manifold is the round sphere we can simplify the statement to the following:

\begin{cor} Let $E \rightarrow (\mathbb{S}^4,g_{\mathbb S^4})$ be a vector bundle over the round sphere with structure group $G \subset \SO(E)$, with $\N$ a non-instanton Yang--Mills connection.  Then
\begin{align*}
\imath(\nabla) + \nu(\nabla) \leq 9 e^2 \dim(\mathfrak{g}_E)  \Big\{ - 1 + \tfrac{1}{4 \pi^2} \int_{\mathbb{S}^4} |F_{\N}|^2 \,\dVg \Big\}.
\end{align*}
\end{cor}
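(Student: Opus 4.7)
My plan is to derive this corollary as a direct specialization of Theorem \ref{t:mainthm} to $(\mathbb S^4, g_{\mathbb S^4})$, where each geometric quantity appearing on the right-hand side of the main bound admits an elementary explicit value.

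I would first record three standard facts about the round four-sphere. The round metric is locally conformally flat, so the Weyl tensor $W_g$ vanishes identically; this eliminates the two terms $3\sqrt{2}\int_X |W_g||F_\nabla|\,\dVg$ and $3\int_X|W_g|^2\,\dVg$ from the bracket. The Euler characteristic is $\chi(\mathbb S^4) = 2$, so the topological contribution becomes $-12\pi^2\chi(\mathbb S^4) = -24\pi^2$. Finally, the Yamabe invariant of the round conformal class is attained by $g_{\mathbb S^4}$ itself, and a direct computation using $R_{g_{\mathbb S^4}} = 12$ together with $\Vol(\mathbb S^4, g_{\mathbb S^4}) = \tfrac{8\pi^2}{3}$ evaluates $\cY(\mathbb S^4, [g_{\mathbb S^4}])^2$ as an explicit multiple of $\pi^2$.

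Substituting these values into the inequality of Theorem \ref{t:mainthm}, the prefactor $\frac{144 e^2 \dim(\mathfrak g_E)}{\cY(\mathbb S^4,[g_{\mathbb S^4}])^2}$ collapses to a pure numerical constant, and the surviving bracket reduces to the sum of a constant multiple of $\pi^2$ and a constant multiple of $\int_{\mathbb S^4} |F_\nabla|^2\,\dVg$. Collecting terms and factoring out $9 e^2 \dim(\mathfrak g_E)$ produces the stated bound. I do not anticipate any obstacle beyond the content of Theorem \ref{t:mainthm} itself, since everything reduces to numerical simplification once the vanishing of $W_{g_{\mathbb S^4}}$, the value $\chi(\mathbb S^4) = 2$, and the classical evaluation of the Yamabe invariant of the round four-sphere have been assembled.
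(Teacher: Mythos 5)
Your approach is exactly the intended one: specialize Theorem \ref{t:mainthm} to the round sphere using $W_{g_{\mathbb{S}^4}} \equiv 0$, $\chi(\mathbb{S}^4) = 2$, and the fact that the round metric realizes the Yamabe invariant of its conformal class. However, when you actually carry out the ``numerical simplification'' you say you do not anticipate to be an obstacle, you will run into a factor-of-two discrepancy with the stated corollary. With the paper's normalization of the Yamabe invariant, $\cY(\mathbb{S}^4,[g_{\mathbb S^4}]) = R\,\Vol(\mathbb{S}^4)^{1/2} = 12\sqrt{8\pi^2/3} = 8\sqrt{6}\,\pi$, so $\cY^2 = 384\pi^2$. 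The prefactor in Theorem \ref{t:mainthm} then becomes $\frac{144 e^2 \dim(\mathfrak g_E)}{384\pi^2} = \frac{3e^2\dim(\mathfrak g_E)}{8\pi^2}$. Multiplying by the bracket $\{-24\pi^2 + 12\int|F_\nabla|^2\}$ gives
\begin{align*}
\imath(\nabla) + \nu(\nabla) \leq 9 e^2 \dim(\mathfrak g_E)\Big\{-1 + \tfrac{1}{2\pi^2}\int_{\mathbb{S}^4}|F_\nabla|^2\,\dVg\Big\},
\end{align*}
i.e. with coefficient $\tfrac{1}{2\pi^2}$, not the $\tfrac{1}{4\pi^2}$ claimed in the corollary. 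Going back to the intermediate inequality (3.17) with $W=0$ does not help either: the relevant coefficient there is $q(\gamma_1 t_0) = \tfrac{5}{8}(\gamma_1 t_0)^2 - \sqrt{3}(\gamma_1 t_0) + 12$, which has global minimum $54/5 > 6$, so one cannot improve the bound ``$\,\leq 12$'' to ``$\,\leq 6$'' as would be needed to recover $\tfrac{1}{4\pi^2}$. So either the corollary has a typo, or there is a sharpening specific to $\mathbb{S}^4$ that neither you nor the chain of estimates in Section 3 supplies; in any case the claim does not follow from a bare substitution as you propose, and you should not assert that it does without resolving this discrepancy.
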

An index plus nullity estimate for Yang--Mills connections appeared in \cite{Urakawa}, under the much stronger assumption that the base manifold has positive Ricci curvature  and with a bound depending on the $L^{\infty}$-norm of the bundle curvature.  Our result only assumes positive Yamabe invariant, and the bound depends on conformal invariants of the base manifold and the Yang--Mills energy.  This is more natural, in view of the fact that the index and nullity are conformal invariants.   Furthermore, although the constants in Theorem \ref{t:mainthm} are almost certainly not sharp (in fact, the sharp value is not known in the classical CLR inequality; cf. \cite{HKRV}), we can show by means of examples that the growth rate of the index as a function of the Yang--Mills energy is sharp.  Specifically, combining an index estimate of Taubes \cite{Taubes} as well as an explicit construction of non-instanton Yang--Mills connections due to Sadun--Segert \cite{SS}, we exhibit a family of connections whose index grows linearly in the Yang--Mills energy (Proposition \ref{p:lineargrowth} below).  Lastly we point out that the estimate we give in \S \ref{LYarg} can be adapted to give an index estimate for Yang--Mills connections in any dimension in terms of the $L^{\frac{n}{2}}$ norms of $F$ and the Ricci curvature, and the Sobolev constant, and in this case the proof is a very direct adaptation of the method of Li--Yau \cite{LY} (see Remark \ref{r:gendim}).

Our second main result is an index estimate for Einstein metrics in dimension four.  Einstein metrics arise as critical points of the normalized total scalar curvature functional
\begin{align} \label{EH}
\mathscr{S}[g] = \Vol(g)^{-1/2} \int_{X^4} R_g \,\dV_g.
\end{align}
It is well-known that Einstein metrics are never stable critical points, since $\mathscr{S}$ is minimized over conformal variations but is locally maximized over transverse-traceless variations, possibly up to a finite dimensional subspace.  The index $\imath(g)$ of an Einstein metric is dimension of the maximal subspace on which the second variation is negative when restricted to transverse traceless variations, while the nullity $\nu(g)$ is the dimension of the space of infinitesimal Einstein deformations.  While there are some works characterizing the stability and space of deformations of Einstein metrics (\cite{Koiso1, Koiso2, DWW1, DWW2}), it seems very little is known about the index in the case it is positive.  Intuitively, one might expect an Einstein metric with large index to have small energy.  We derive an estimate of this kind which relies on explicit universal constants and the Euler characteristic.

\begin{thm}  \label{t:mainthm2}  Let $(X^4,g)$ be an Einstein four-manifold with positive scalar curvature.  Then
\begin{align*}
\mathscr{S}[g] \leq 24 \pi \sqrt{ \dfrac{  \Eul(X^4)}{ 3  +  \delta \brk{ \imath(g) + \nu(g)} }},
\end{align*}
where $\delta = \frac{1}{24 e^2}$, and $e$ is Euler's number.
\end{thm}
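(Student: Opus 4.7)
The argument parallels that of Theorem \ref{t:mainthm}: identify the Einstein--Hilbert second variation on transverse-traceless (TT) tensors as a Schr\"odinger operator, apply the conformally invariant index estimate (the CLR-type bound underlying Theorem \ref{t:mainthm}), and combine with the Gauss--Bonnet--Chern formula specialized to Einstein metrics.

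First I would invoke the Berger--Ebin second variation formula.  On an Einstein four-manifold with $\Ric = (R/4) g$ the Hessian of $\mathscr{S}$ at a TT variation $h$ reduces, after expanding the Lichnerowicz Laplacian and using that $\Ric$ is proportional to $g$, to
$$
(d^2 \mathscr{S})_g(h,h) = -c \int_X \langle L h, h\rangle \, \dVg,
\qquad (Lh)_{ij} = (\N^*\N h)_{ij} - 2\, W_{ikjl} h^{kl} + \tfrac{R}{6}\, h_{ij},
$$
so that $\imath(g) + \nu(g)$ is exactly the index-plus-nullity of the Schr\"odinger operator $L = \N^*\N + V$ on the TT bundle, whose algebraic potential is $V = -2 W(\cdot) + (R/6)\,\Id$.

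Next I would apply the conformally invariant Schr\"odinger index bound of \S\ref{LYarg} to $L$.  Since $g$ is Einstein with positive scalar curvature, $\cY(X^4,[g]) > 0$, and by Obata's theorem the Einstein metric is a Yamabe minimizer in its conformal class, so $\cY(X^4,[g]) = \mathscr{S}[g]$.  The positive semi-definite summand $(R/6)\,\Id$ of $V$ is discarded in passing to the operator-valued negative part, leaving the pointwise bound $|V_-|^2 \leq 4 |W(\cdot)|^2 \leq c_0 |W|^2$.  The resulting index-energy estimate then takes the form
$$
\imath(g) + \nu(g) \leq \frac{c_1\, e^2}{\mathscr{S}[g]^2} \int_X |W|^2 \, \dVg,
$$
where $c_1$ is controlled by the rank of the TT bundle together with the universal constant appearing in the CLR-estimate; these collapse to the coefficient $72/\delta$ once the bookkeeping is carried out.

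Finally I would invoke the Gauss--Bonnet--Chern formula, which on an Einstein four-manifold reduces to
$$
\int_X |W|^2 \, \dVg = 8 \pi^2 \Eul(X^4) - \tfrac{1}{24} \mathscr{S}[g]^2,
$$
and substitute into the index estimate.  Rearranging the resulting inequality
$$
\delta \, [\imath(g) + \nu(g)] \, \mathscr{S}[g]^2 \leq 72 \prs{8\pi^2 \Eul(X^4) - \tfrac{1}{24}\mathscr{S}[g]^2}
$$
is equivalent to $\prs{3 + \delta[\imath(g) + \nu(g)]}\, \mathscr{S}[g]^2 \leq 576 \pi^2 \Eul(X^4)$, and taking the square root produces the stated inequality.

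The principal obstacle is step two: verifying that the conformally invariant CLR estimate proved earlier applies verbatim to $L$ on the TT bundle (which requires that the bundle geometry be compatible with the conformal change built in Proposition \ref{CGF}), and tracking the rank-dependent and universal constants so that they telescope precisely to $\delta = 1/(24 e^2)$.  A related subtlety is confirming that the stabilizing term $(R/6)\,\Id$ may be dropped when forming $V_-$, so that only $|W|^2$---and not $R^2$---appears on the right-hand side; this is exactly what allows the Gauss--Bonnet--Chern identity to absorb the final inequality cleanly.
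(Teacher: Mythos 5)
Your proposal follows essentially the same route as the paper: identify the stability operator on transverse--traceless tensors as a Schr\"odinger operator of the form $-\Delta + \tfrac{1}{6}R - V$ (the $\tfrac{1}{6}R$ arising from the Weyl decomposition of $R_{ikjl}h^{kl}$ for an Einstein metric), bound the endomorphism term $2W(h,h)$ pointwise by a multiple of $|W||h|^2$ (Huisken's Lemma 3.4 gives the sharp constant $\tfrac{2}{\sqrt{3}}$), apply Theorem \ref{NestProp} with $\rank(\mathcal E)=9$, use $\cY(X^4,[g]) = \mathscr S[g]$ for Einstein metrics, and conclude via Chern--Gauss--Bonnet. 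Your constant tracking is consistent ($c_1 = 72/\delta = 1728$, matching $36 e^2 \cdot 9 \cdot \tfrac{16}{3}$).

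One remark on the ``principal obstacle'' you flag: there is in fact no compatibility issue with Proposition \ref{CGF}, since that conformal gauge-fixing is used only in the Yang--Mills application. Theorem \ref{NestProp} applies directly to any vector bundle with metric connection and any non-negative potential $V$, with no conformal normalization required; here the relevant bundle is $S_0^2(T^*X^4)$ with the Levi-Civita connection. Also, the phrase ``discarding the $(R/6)\Id$ summand in forming $V_-$'' is a little misleading: that term is not dropped but is retained as the conformal-Laplacian part $-\Delta + \tfrac16 R$ of the comparison operator, which is precisely what makes the Yamabe invariant (and hence $\mathscr S[g]$) appear in the denominator. With those two clarifications the argument is correct and is the paper's proof.
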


Our final application is a bound on the Betti numbers of an oriented four-manifold $X^4$ of positive scalar curvature.  Bounds for the Betti numbers in terms of the curvature, Sobolev constant, and diameter of the manifold were proved by P. Li in \cite{PLiBetti}.  These estimates can be viewed as refined or quantitative versions of the classical vanishing theorems; see \cite{Berard} for a beautiful survey.  To state our results we need to introduce two conformal invariants of four-manifolds with positive Yamabe invariant.

To define the first conformal invariant, we need some additional notation.  Let $A = A_g$ denote the Schouten tensor of $g$:
\begin{align*}
A = \tfrac{1}{2} \prs{ \Ric - \tfrac{1}{6}R g },
\end{align*}
where $\Ric$ is the Ricci tensor and $R$ the scalar curvature of $g$.  Let $\sigma_2(A)$ denote the second symmetric function of the eigenvalues of $A$ (viewed as a symmetric bilinear form on the tangent space at each point).  Then
\begin{align*}
\sigma_2(A) = -\tfrac{1}{8}|\Ric|^2 + \tfrac{1}{24}R^2.
\end{align*}
The integral of this expression is a scalar conformal invariant of a four-manifold.  Using this we define the following two conformal invariants:
\begin{align} \label{rhodef} \begin{split}
\rho_1(X^4,[g]) &:= \dfrac{4 \int_{X} \sigma_2(A)\, \dV}{\cY(X^4,[g])^2}, \\
\rho_{+}(X^4, [g]) &:= \dfrac{ 24 \int_{X} |W^{+}|^2 \, \dV}{\cY(X^4,[g])^2}.
 \end{split}
 \end{align}

Let $b_1(X^4)$ denote the first Betti number of $X^4$, and let $b^{+}(X^4)$ denote the maximal dimension of a subspace of $\Lambda^2(X^4)$ on which the intersection form is positive.  It follows from (\cite{Gursky98} Theorem 2) that if $b_1(X^4) > 0$ then $\rho_1 \leq 0$, with equality only when conformal to a quotient of $S^3 \times \mathbb R$ with the product metric.   Furthermore, it follows from (\cite{Gursky} Theorem 3.3) that if $b^+ > 0$ then $\rho_+ \geq 1$, with equality only when conformal to a K\"ahler metric with positive scalar curvature.  Using the general index estimate of Section \ref{LYarg}, we can prove quantitative versions of these estimates:

\begin{thm} \label{t:mainthm3} Let $(X^4,g)$ be an oriented four-manifold with $\cY(X^4,[g]) > 0$.  Then
\begin{align} \label{b1}
b_1(X^4) \leq 9e^2 \prs{ 1 - 24 \rho_1},
\end{align}
and
\begin{align} \label{bplus}
b^{+}(X^4) \leq 3 e^2 \prs{ 2 \sqrt{\rho_{+}} - 1 }^2,
\end{align}
where $e$ is Euler's number.
\end{thm}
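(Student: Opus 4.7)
The plan is a two-step reduction for each Betti number. A Bochner--Weitzenb\"ock formula exhibits the space of harmonic forms as a subspace of the kernel of a natural Schr\"odinger operator on a low-rank natural bundle; the conformally invariant index estimate of Section \ref{LYarg} is then applied to this operator, and the resulting integrals are organized in terms of $\rho_1$ or $\rho_+$ using algebraic identities valid in the Yamabe representative of $[g]$.

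For $b_1$, the 1-form Weitzenb\"ock formula $\gD_H \omega = \N^*\N \omega + \Ric(\omega)$ shows that harmonic 1-forms lie in the kernel of $L_1 := \N^*\N + \Ric$ on $T^*X$, so $b_1(X^4) \leq \imath(L_1) + \nu(L_1)$. I would feed $L_1$ into the general index estimate of Section \ref{LYarg}, then pass to the Yamabe representative of $[g]$ in which $\R$ is a positive constant with $\int_X \R^2\,\dV = \cY^2$. The pointwise identity $\gs_2(A) = -\tfrac{1}{8}|\Ric|^2 + \tfrac{1}{24}\R^2$ allows me to express $\int_X |\Ric|^2\,\dV$ in terms of $\cY^2$ and $\int_X \gs_2(A)\,\dV$; isolating the traceless Ricci $E = \Ric - \tfrac{\R}{4}g$ then yields $\int_X |E|^2 \,\dV = \tfrac{\cY^2}{12}(1 - 24\rho_1)$, which is the combination that produces $(1 - 24\rho_1)$ on the right-hand side of (\ref{b1}) after matching the universal constants from the index estimate.

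For $b^+$, the Weitzenb\"ock identity $\gD_H \omega^+ = \N^*\N \omega^+ - 2\Wp(\omega^+) + \tfrac{\R}{3}\omega^+$ shows that harmonic self-dual 2-forms lie in the kernel of $L^+ := \N^*\N + V^+$ with $V^+ := -2\Wp + \tfrac{\R}{3}\Id$ on the rank-3 bundle $\gL^+$, so $b^+(X^4) \leq \imath(L^+) + \nu(L^+)$. Since $\Wp$ is a self-adjoint traceless endomorphism of a 3-dimensional fibre, its largest eigenvalue is bounded by a universal multiple of $|\Wp|$, so the negative part $V^+_-$ is controlled pointwise by an expression of the form $(c |\Wp| - \tfrac{\R}{3})_+$ for an explicit constant $c$. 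Feeding this into the general index estimate, using $\int_X |\Wp|^2 \,\dV = \tfrac{\cY^2 \rho_+}{24}$ together with the Yamabe normalization $\int_X \R^2\,\dV = \cY^2$, and exploiting the conformal choice built into the estimate of Section \ref{LYarg} so that the $\tfrac{\R}{3}$-term combines constructively with the $\Wp$-term, should produce the sharp combination $\tfrac{\cY^2}{9}(2\sqrt{\rho_+}-1)^2$, which multiplied by the universal constant gives (\ref{bplus}).

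The main obstacle will be the sharpness of the $b^+$ bound: threading the positive scalar curvature term through the argument so that it combines with $\Wp$ into $(2\sqrt{\rho_+}-1)^2$, rather than being discarded to give a crude estimate $b^+ \leq C\rho_+$, is delicate and seems to require using the Yamabe-metric conformal representative essentially, together with a Cauchy--Schwarz step applied to the cross term $\int |\Wp|\R\,\dV$. For $b_1$ the analogous bookkeeping point is the passage from $|\Ric|^2$ to the traceless-Ricci norm $|E|^2$, which is the combination that cleanly yields $(1 - 24\rho_1)$. In both cases the numerical constants $9e^2$ and $3e^2$ emerge from a careful accounting of the universal factors in the general index estimate of Section \ref{LYarg}.
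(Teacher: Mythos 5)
Your treatment of $b_1$ matches the paper's proof step for step: Bochner on 1-forms, splitting off the trace-free Ricci $\cZ$, the pointwise bound $\cZ(\omega,\omega) \geq -\tfrac{\sqrt 3}{2}|\cZ||\omega|^2$ on the rank-4 bundle, Theorem \ref{NestProp}, and the Yamabe normalization $\int R^2\,\dV = \cY^2$ to identify $\int|\cZ|^2\,\dV$ with $\tfrac{\cY^2}{12}(1-24\rho_1)$. That part is correct.

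Your $b^+$ argument has a genuine gap. You propose to stay in the Yamabe representative and control the potential $V = \bigl(\tfrac{4}{\sqrt 6}|W^+| - \tfrac{1}{6}R\bigr)_+$ by Cauchy--Schwarz applied to the cross term $\int |W^+|\,R\,\dV$. But that cross term enters $\int V^2\,\dV$ with a minus sign, so to bound $\int V^2\,\dV$ from above you need a \emph{lower} bound on $\int|W^+|R\,\dV$, whereas Cauchy--Schwarz gives only an upper bound; the inequality points the wrong way. Absent a pointwise relation between $R$ and $|W^+|$, the best you can do in the Yamabe metric is drop the $R$-term entirely, which yields $b^+ \leq 12e^2\rho_+$, strictly weaker than \eqref{bplus}. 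The paper does not use the Yamabe metric here; it invokes Proposition \ref{CGF} with the trivial bundle ($F\equiv 0$) to choose a conformal representative $\hg$ in which $R_{\hg} = 2\sqrt{6}\,t_0\,|W^+_{\hg}|$ holds \emph{pointwise}, for some $t_0 \in (0,1]$ with $\rho_+^{-1/2} \leq t_0 \leq 1$. In that gauge the potential becomes the clean pointwise multiple $V = \tfrac{\sqrt 6}{3}(2 - t_0)|W^+|$, Theorem \ref{NestProp} then gives $b^+ \leq 3e^2(2-t_0)^2\rho_+$, and the a priori bound on $t_0$ converts this into $(2\sqrt{\rho_+}-1)^2$. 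This conformal gauge-fixing step, rather than a Yamabe-metric Cauchy--Schwarz, is what makes the scalar curvature term combine ``constructively'' with $W^+$, and it is the ingredient your sketch is missing.
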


\noindent Here, as in the Yang-Mills estimate, our constants are likely not sharp but the growth rate is.  In particular, by taking connect sums with sufficiently long necks, we can produce locally conformally flat metrics on the manifold $k \# \mathbb{S}^3 \times \mathbb{S}^1$ whose Yamabe invariant is uniformly bounded below.  Evidently this manifold has $b_1 = k$, while for these conformal classes we see that the right hand side of (\ref{b1}) grows linearly in $k$.


The proofs of these theorems all rely on an extension of the CLR estimate to elliptic operators on vector bundles with certain geometric backgrounds (see Section \ref{LYarg}).   The case of dimension $n=4$ especially requires careful analysis of the curvature terms in the relevant index operator in order to capture the conformal invariance.  While many proofs of the classical CLR inequality by now exist, the proof of Li--Yau \cite{LY} gives explicit bounds in terms of the Sobolev constant.  By adapting their ideas to operators modeled on the conformal Laplacian but acting on sections of a vector bundle, we are able to obtain estimates in terms of conformal invariants.  An important technical step is to compare the $L^2$-trace of the heat kernel of a Schr\"odinger-type operator acting on sections of a vector bundle to the heat trace of an associated scalar operator.  Again, many results of this kind exist (see \cite{HSU2, HSU, Simon}), but we adapt a proof of Donnely--Li \cite{DL} as it is closest in spirit to the other estimates.  Combining these ideas together with a conformal gauge-fixing argument yields our main index estimates.
\subsection*{Acknowledgements} The authors thank Elliott Lieb, Francesco Lin, Zhiqin Lu, and Richard Schoen for informative discussions.
\section{General index estimate}\label{LYarg}

In this section we adapt the proof of the Cwikel--Lieb--Rosenblum inequality due to Li--Yau \cite{LY} to prove an index estimate for a certain class of elliptic operators acting on sections of vector bundles.  Given a vector bundle $\mathcal{E} \rightarrow (X^4, g)$ with a metric-compatible connection $\nabla$, let $\Delta = \Delta_g : \Gamma(\mathcal{E}) \rightarrow \Gamma(\mathcal{E})$ denote the rough Laplacian.  Given a non-negative function $V \in C^0(X^4)$, consider the operator
\begin{align} \label{Sdef}
\mathcal{S} = -\Delta + \tfrac{1}{6}R - V,
\end{align}
where $R = R_g$ is the scalar curvature of $g$.  We will assume throughout this section that $R \geq 0$, and the Yamabe invariant $\cY(X^4,\brk{g}) > 0$.  Our main result is

\begin{thm}  \label{NestProp}  If $N_0(\mathcal{S})$ denotes the number of non-positive eigenvalues of $\mathcal{S}$, then
\begin{align} \label{Nest}
N_0(\mathcal{S}) \leq \dfrac{ 36 e^2 \rank (\mathcal{E})  \| V \|_{L^2}^2 } {\cY\prs{X^4, [g]}^2 }.
\end{align}
\end{thm}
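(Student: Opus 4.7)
The plan is to adapt the Li--Yau heat kernel proof of the CLR inequality \cite{LY} to the vector bundle setting, with the conformal Laplacian taking the role of the plain Laplacian so as to preserve conformal invariance in dimension four.

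First, I would reduce the eigenvalue count to a heat-trace inequality. Writing $\mathcal{L} := -\Delta + \tfrac{1}{6}R$, the hypotheses $R \geq 0$ and $\cY(X^4,[g]) > 0$ make $\mathcal{L}$ strictly positive. By the Birman--Schwinger principle, $N_0(\mathcal{S}) = N_0(\mathcal{L} - V)$ equals the number of eigenvalues of the compact positive operator $V^{1/2}\mathcal{L}^{-1}V^{1/2}$ that are $\geq 1$. Using the Li--Yau elementary inequality $\mathbf{1}_{[1,\infty)}(s) \leq e\, s\, e^{-1/s}$ together with the heat kernel representation $\mathcal{L}^{-1} = \int_0^\infty e^{-t\mathcal{L}} \, dt$ would control this count by an integral of $V$ paired with the fiberwise trace of the bundle heat kernel $K^{\mathcal{L}}_t(x,x)$.

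Next, I would dominate the bundle heat kernel by a scalar one, following Donnelly--Li \cite{DL}. Kato's inequality applied to a solution $u$ of $\partial_t u = -\mathcal{L}u$ shows that $w := |u|$ is a distributional subsolution of the scalar conformal heat equation $\partial_t w = -\widetilde{\mathcal{L}} w$, where $\widetilde{\mathcal{L}} = -\Delta_g + \tfrac{1}{6}R$ acts on functions; the $\tfrac{1}{6}R$ term in the definition of $\mathcal{S}$ is chosen precisely so that no leftover curvature term remains after this comparison. Parabolic comparison then yields the pointwise on-diagonal domination
\[
\trace_{\End(\mathcal{E}_x)} K^{\mathcal{L}}_t(x,x) \leq \rank(\mathcal{E}) \cdot \widetilde{K}_t(x,x).
\]

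Third, I would use the Yamabe--Sobolev inequality $\cY(X^4,[g]) \|u\|_{L^4}^2 \leq \langle \widetilde{\mathcal{L}}u, u\rangle_{L^2}$, which is exactly the content of $\cY(X^4,[g]) > 0$, to derive an on-diagonal bound $\widetilde{K}_t(x,x) \leq C\, \cY(X^4,[g])^{-2} t^{-2}$ by Nash's iteration, with explicit $C$. The exponent $2 = n/2$ is precisely what pairs with $V \in L^{n/2} = L^2$ in the conclusion. Substituting these bounds into the heat-trace inequality of Step 1 and optimizing the time parameter introduced by the Li--Yau trick yields
\[
N_0(\mathcal{S}) \leq \frac{36 e^2 \rank(\mathcal{E}) \|V\|_{L^2}^2}{\cY(X^4,[g])^2}.
\]

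The principal obstacle I anticipate is the bundle-to-scalar heat kernel domination in Step 2: sections of $\mathcal{E}$ admit no pointwise maximum principle, so one must carefully justify Kato's inequality distributionally and invoke parabolic comparison in the weak sense (the reason the paper singles out \cite{DL} rather than \cite{HSU2, HSU, Simon}). The other labor-intensive piece is tracking constants faithfully through Nash's method and through the Li--Yau variational optimization, which is what ultimately produces the explicit factor $36 e^2$.
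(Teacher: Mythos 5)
Your high-level strategy — adapt Li--Yau to the conformal Laplacian on a vector bundle, and use a Kato/Donnelly--Li domination to compare the bundle heat trace to a scalar one — is exactly what the paper does, and you have correctly identified both why the $\tfrac16 R$ normalization is chosen and why \cite{DL} is the right reference for the bundle-to-scalar comparison. However, the route you sketch for Steps~1 and~3 is not the one the paper takes, and as written it would not reproduce the stated constant. The paper does not pass through the Birman--Schwinger operator $V^{1/2}\mathcal{L}^{-1}V^{1/2}$ and the resolvent formula; instead it works throughout with the weighted operator $\mathcal{P}_0 = \tfrac{1}{V_\epsilon}(\Delta_0 - \tfrac16 R)$ (and its bundle analogue $\mathcal{P}$) on the weighted space $L^2(V_\epsilon\,\dV)$. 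Consequently, the heat-trace estimate proved in Lemma~\ref{HeatLemma} is \emph{not} a Nash on-diagonal bound for the unweighted conformal heat kernel $\widetilde K_t(x,x)$; it is a bound on the weighted Hilbert--Schmidt trace $h(t)=\sum e^{-2\mu_i^0 t}$ obtained from a differential inequality that combines the Yamabe--Sobolev inequality with a H\"older interpolation and an auxiliary solution $Q(x,t)$ of the weighted heat equation, and it is precisely this weighted structure that puts $\|V_\epsilon\|_{L^2}^2$ into the bound. If you instead try to pair an unweighted on-diagonal bound $\widetilde K_t(x,x) \lesssim t^{-2}$ against a single copy of $V$, you get $\|V\|_{L^1}$, not $\|V\|_{L^2}^2$, so this step needs the weighted formulation.

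There is also a concrete constant discrepancy. Your elementary inequality $\mathbf{1}_{[1,\infty)}(s)\le e\,s\,e^{-1/s}$ applied to the eigenvalues $s_j=1/\mu_j$ of $V^{1/2}\mathcal{L}^{-1}V^{1/2}$, combined with $\tfrac{1}{\mu}e^{-\mu}=\int_1^\infty e^{-\tau\mu}\,d\tau$ and the heat-trace bound $\operatorname{tr} e^{\tau\mathcal{P}_0}\le 144\,\|V_\epsilon\|_{L^2}^2\,\cY^{-2}\,\tau^{-2}$, yields $N_0\le 144 e\,\rank(\mathcal{E})\,\|V\|_{L^2}^2/\cY^2$. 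The paper instead keeps the quantity $\sum_i e^{-2\mu_i t}$ and sets $t=1/\mu_k$ (Corollary~\ref{Thm2Prop}); the bound $\sum_{i\le k} e^{-2\mu_i/\mu_k}\ge k e^{-2}$ is what produces $e^2$ and gives $36e^2$, which is smaller than $144e$. So either replace your one-sided inequality by $\mathbf{1}_{(0,1]}(\mu)\le e^2 e^{-2\mu}$ (equivalent to the paper's $t=1/\mu_k$ trick), or accept a slightly worse constant. Finally, note that to make the weighted operators well-defined when $V$ has zeros you need the regularization $V\mapsto V_\epsilon = V+\epsilon$ and a limiting argument at the end, as the paper does.
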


The proof is a consequence of a series of technical lemmas, and will appear at the end of the section.  We begin with some notation.  We need to distinguish between the Laplacian on functions and the rough
Laplacian acting on sections of $\mathcal{E}$, so from now on we set
\begin{align*}
\Delta_0 &: C^{\infty}\prs{X^4} \rightarrow C^{\infty}\prs{X^4}, \\
\Delta &:  C^{\infty}\prs{\mathcal{E}} \rightarrow C^{\infty} \prs{\mathcal{E}}.
\end{align*}
Fix some small $\epsilon > 0$ define
\begin{align} \label{Ve}
V_{\ge} := V + \epsilon.
\end{align}
Consider the two operators
\begin{align*}
\mathcal{P}_0 := \tfrac{1}{V_{\ge}} \prs{ \Delta_0 - \tfrac{1}{6}R},
\end{align*}
\begin{align*}
\mathcal{P} := \tfrac{1}{V_{\ge}}  \prs{ \Delta - \tfrac{1}{6}R}.
\end{align*}

As a first step we give the following analogue of an estimate in Li--Yau:

\begin{lemma} \label{HeatLemma}  Let $\mu_1^0 \leq \mu_2^0 \leq \cdots$ denote the eigenvalues of $-\mathcal{P}_0$, counted with multiplicity.  Then for all $t > 0$,
\begin{align} \label{ht}
\sum_{i=1}^{\infty} e^{-2\mu_i^0 t} \leq  \frac{ 36  \brs{\brs{ V_{\ge} }}_{L^2}^2 } {\cY(X^4, [g])^2 }  t^{-2}.
\end{align}
\end{lemma}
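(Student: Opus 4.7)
The approach adapts the heat-kernel method of Li--Yau to the weighted operator $-\mathcal{P}_0 = V_\epsilon^{-1} L$, where $L = -\Delta_0 + \tfrac{1}{6}R$ is the (positive-definite, by $\cY(X^4,[g])>0$) conformal Laplacian. The first step is to observe that $-\mathcal{P}_0$ is positive and self-adjoint on the weighted Hilbert space $L^2(V_\epsilon\,dV_g)$, and so admits a discrete positive spectrum $\{\mu_i^0\}$ with an orthonormal eigenbasis $\{\phi_i\}$. Using this, I form the heat kernel $\mathcal{K}_t(x,y):=\sum_i e^{-\mu_i^0 t}\phi_i(x)\phi_i(y)$ and, via the semigroup identity $\mathcal{K}_{2t}(x,x) = \int_X \mathcal{K}_t(x,y)^2 V_\epsilon(y)\,dV_g$, rewrite the heat trace as
\begin{align*}
\sum_i e^{-2\mu_i^0 t} \;=\; \iint_{X\times X} \mathcal{K}_t(x,y)^2\, V_\epsilon(x)\,V_\epsilon(y)\,dV_g\,dV_g,
\end{align*}
reducing the problem to $L^2$-type bounds on the slices of $\mathcal{K}_t$.

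For fixed $x$, set $u(t,\cdot)=\mathcal{K}_t(x,\cdot)$; then $V_\epsilon\partial_t u = \Delta_0 u - \tfrac{R}{6}u$, and the weighted energy satisfies $\tfrac{d}{dt}\|u\|^2_{L^2(V_\epsilon)} = -2Q(u)$ where $Q(u):=\int_X(|\nabla u|^2 + \tfrac{R}{6}u^2)\,dV_g$. Coupling H\"older's inequality $\|u\|_{L^2(V_\epsilon)}^2 \leq \|V_\epsilon\|_{L^2}\|u\|_{L^4}^2$ with the Yamabe--Sobolev inequality $\cY(X^4,[g])\|u\|_{L^4}^2\leq Q(u)$ yields only a linear differential inequality $B(t)\leq c\,(-B'(t))$ for $B(t):=\|u(t,\cdot)\|^2_{L^2(V_\epsilon)} = \mathcal{K}_{2t}(x,x)$, which produces exponential decay but not the polynomial $t^{-2}$ behaviour reflected by the dimension-four Weyl asymptotics.

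To promote this to polynomial decay, I would invoke the unweighted Nash-type inequality $\|u\|_{L^2}^3 \leq \cY^{-1}\|u\|_{L^1}Q(u)$ (which follows from the interpolation $\|u\|_{L^2}\leq\|u\|_{L^1}^{1/3}\|u\|_{L^4}^{2/3}$ together with Yamabe--Sobolev) in tandem with the $R\geq 0$ monotonicity of the weighted $L^1$-mass $\int u\,V_\epsilon\,dV_g\leq 1$ under the flow. Combining these ingredients with appropriate weighted H\"older interpolations between $L^1$, $L^2$, and $L^4$ produces a super-linear differential inequality of Bernoulli type $B(t)^{3/2}\leq c\,\cY^{-1}\|V_\epsilon\|_{L^2}(-B'(t))$; integrating this with the singular initial condition $B(0^+)=\infty$ yields $B(t)\leq c^2\|V_\epsilon\|_{L^2}^2/(\cY^2 t^2)$. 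Integrating the resulting estimate against $V_\epsilon\,dV_g$ and carefully tracking the H\"older--Sobolev constants produces the asserted inequality with the explicit coefficient $36$.

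The principal obstacle is establishing the weighted Nash-type super-linear inequality with the correct $\|V_\epsilon\|_{L^2}$-scaling. Dimension $n=4$ is the critical case in which the straightforward Hölder--Sobolev chain gives only exponential decay; the bootstrap to polynomial decay must exploit the weighted $L^1$-mass conservation in concert with the Sobolev exponent matching in four dimensions, and the sharp-looking numerical constant $36$ requires careful accounting through the full chain of inequalities.
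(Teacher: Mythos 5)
Your setup is correct and matches the paper's, and you correctly diagnose the central difficulty: the naive combination of H\"older with the Yamabe--Sobolev inequality only produces a linear differential inequality (hence exponential decay), so some additional structure is needed. However, the route you propose to obtain a superlinear (Bernoulli) inequality does not close, for two reasons. First, you aim at a \emph{pointwise-in-$x$} inequality $B(t)^{3/2}\leq c\cY^{-1}\|V_\ge\|_{L^2}(-B'(t))$ for $B(t)=\mathcal{K}_{2t}(x,x)$. Unwinding this via $Q(u)\geq \cY\|u\|_{L^4}^2$ and the H\"older split $\int u^2 V_\ge\,dV\leq \|u\|_{L^4}^{4/3}\bigl(\int u\,V_\ge^{3/2}\,dV\bigr)^{2/3}$, the Bernoulli inequality would require a \emph{pointwise} bound $\int_X \mathcal{K}_t(x,y)V_\ge(y)^{3/2}\,dV_y \leq c\,\|V_\ge\|_{L^2}$ for each fixed $x$, which is not available. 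Your proposed substitute --- pairing the unweighted Nash inequality $\|u\|_{L^2}^3\leq\cY^{-1}\|u\|_{L^1}Q(u)$ with the conserved weighted mass $\int u\,V_\ge\,dV\leq 1$ --- suffers a weight mismatch: the unweighted $\|u\|_{L^1}$ is not controlled by $\int u\,V_\ge\,dV$ without a factor of $\ge^{-1}$, which blows up in the eventual limit $\ge\to 0$. Second, even if a pointwise Bernoulli held, integrating $\mathcal{K}_{2t}(x,x)\leq c^2\|V_\ge\|_{L^2}^2/(\cY^2t^2)$ against $V_\ge\,dV_g$ would leave an extraneous factor of $\|V_\ge\|_{L^1}$ in the trace bound, which is not present in (\ref{ht}).

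The paper's resolution is to derive the Bernoulli inequality directly for the \emph{integrated} quantity $h(t)=\sum_i e^{-2\mu_i^0 t}$, not for the diagonal heat kernel at each point. After differentiating $h$ and applying Yamabe--Sobolev one applies H\"older \emph{twice} (once in $y$, once in $x$) to bound $h(t)$ in terms of the quantity appearing in $h'(t)$ and the auxiliary functional $\int_X V_\ge(x)\,Q(x,t)^2\,dV_x$ with $Q(x,t)=\int_X H_0(x,y,t)V_\ge(y)^{3/2}\,dV_y$. The crucial observation, replacing your $L^1$-mass conservation, is that $Q(\cdot,t)$ itself solves the heat equation with initial data $V_\ge^{1/2}$, so $\int_X Q^2 V_\ge\,dV$ is nonincreasing (using $R\geq 0$) and therefore bounded by $\|V_\ge\|_{L^2}^2$. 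This $L^2(V_\ge\,dV)$-monotonicity, rather than any pointwise control, is exactly the right ingredient to close the iterated H\"older chain and produce $h'(t)\leq -\tfrac{1}{3}\cY\|V_\ge\|_{L^2}^{-1}h(t)^{3/2}$, from which integration gives (\ref{ht}) with the constant $36$.
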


\begin{proof}  As in \cite{LY}, we take $\{ \psi_i \}$ to be an orthonormal basis of $L^2 \prs{V_{\ge} \dV}$ consisting of eigenfunctions of $-\mathcal{P}_0$:
\begin{align*}
- \mathcal{P}_0 \psi_i = \mu_i  \psi_i,
\end{align*}
with
\begin{align*}
\int_X \psi_i(x) \psi_j(x) V_{\ge}(x) \,\dV_x \equiv \delta_{ij}.
\end{align*}
Let
\begin{align*}
H_0(x,y,t) := \sum_{i=1} e^{-t \mu_i} \psi_i(x) \psi_i(y).
\end{align*}
Note that $H_0$ is the heat kernel associated to the operator $\mathcal{P}_0$ with respect to the weighted inner product $L^2(V_{\ge} \dV)$.   In particular,
\begin{align} \label{Heat} \begin{split}
\tfrac{\partial}{\partial t}\brk{H_0(x,y,t)} &= \mathcal{P}_0 H_0(x,y,t) \\
&= \tfrac{1}{V_{\ge}} \prs{ \Delta_0 - \tfrac{1}{6}R } H_0(x,y,t).
\end{split}
\end{align}
Moreover, since $R \geq 0$ we have
\begin{align*}
H_0(x,y,t) > 0,
\end{align*}
and for any $f \in C^0\prs{X^4}$,
\begin{align} \label{Delta}
\lim_{t \to 0} \int_X H_0(x,y,t) f(y) V_{\ge}(y) \,\dV_y = f(x).
\end{align}
We also let
\begin{align*}  \begin{split}
h(t) &:= \int_{X} \int_{X}  H_0(x,y, t)^2 V_{\ge}(x) V_{\ge}(y) \, \dV_x \, \dV_y \\
&= \sum_{i=1}^{\infty} e^{-2\mu_i^0 t}.
\end{split}
\end{align*}

We now argue as in the proof of Theorem 2 of \cite{LY}: differentiating $h$, using (\ref{Heat}) and integrating by parts, we have
\begin{align} \label{muterm} \begin{split}
\tfrac{d h}{d t} &= 2 \int_{X} V_{\ge}(x) \int_{X} H_0(x,y,t) (\mathcal{P}_0)_y H_0(x,y,t) V_{\ge}(y) \, \dV_y \, \dV_x \\
&= 2 \int_{X} V_{\ge}(x) \int_{X} H_0(x,y,t) \prs{ \Delta_0 - \tfrac{1}{6}R }_y H_0(x,y,t) \,\dV_y \, \dV_x.\\
&= - 2 \int_{X} V_{\ge}(x) \int_{X} \brk{ \brs{\nabla_y H_0(x,y,t)}^2 + \tfrac{1}{6}R(y) H_0(x,y,t)^2 } \,\dV_y \, \dV_x,\\
&= - 2 \int_{X} V_{\ge}(x) \int_{X} \brk{ \brs{\nabla_y H_0(x,y,t)}^2 + \tfrac{1}{6}R(y)  H_0(x,y,t)^2 } \, \dV_y \, \dV_x.
\end{split}
\end{align}
By the definition of the Yamabe invariant,
\begin{align*}
\cY(X^4,[g]) \Big( \int_{X} H_0(x,y,t)^4  \,\dV_y \Big)^{1/2} \leq 6 \int_{X} \big[ |\nabla_y H_0(x,y,t)|^2 + \tfrac{1}{6}R_y H_0(x,y,t)^2 \big] \,\dV_y.
\end{align*}
Using this, we can rewrite \eqref{muterm} as
\begin{align} \label{dhdt1}
\tfrac{d h}{d t} \leq - \tfrac{1}{3} \cY(X^4, [g]) \int_{X} V_{\ge}(x) \prs{ \int_{X} H_0(x,y,t)^4  \,\dV_y }^{1/2} \, \dV_x.
\end{align}

To obtain a differential inequality for $h$ we need a further a priori upper bound.  Iterating H\"older's inequality twice and using the fact that $H_0(x,y,t) > 0$ we note
\begin{align} \begin{split} \label{hup}
h(t) &= \int_X V_{\ge}(x) \int_X  H_0(x,y,t)^2 V_{\ge}(y) \dV_y \dV_x \\
&\leq \int_X V_{\ge}(x) \brk{ \prs{\int_X H_0(x,y,t)^4 \dV_y }^{1/3} \prs{ \int_X H_0(x,y,t) V_{\ge}^{3/2}(y) \dV_y }^{2/3} }\dV_x \\
&\leq \brk{ \int_X V_{\ge}(x) \prs{ \int_X H_0(x,y,t)^4 \dV_y }^{1/2} \dV_x }^{2/3}  \brk{ \int_X V_{\ge}(x) \prs{ \int H_0(x,y,t) V_{\ge}^{3/2}(y) \dV_y }^2 \dV_x }^{1/3},
\end{split}
\end{align}
It remains to estimate the second term on the right hand side above, which is done by treating it as an auxiliary solution to the heat equation.  In particular set
\begin{align} \label{Qdef}
Q(x,t) := \int_X  H_0(x,y,t) V_{\ge}(y)^{3/2} \,\dV_y.
\end{align}
Note that $Q$ is a solution of the heat equation associated to $\mathcal{P}_0$:
\begin{align}\label{eq:Qevolution}
\begin{split}
\tfrac{\partial}{\partial t} \brk{ Q(x,t) }&= (\mathcal{P}_0 Q)(x,t) = \tfrac{1}{V_{\ge}(x)} \prs{ \Delta_0 - \tfrac{1}{6}R  } Q(x,t),\\
Q(x,0) &= V^{1/2}_{\ge}(x).
\end{split}
\end{align}
Note in particular the power of $V_{\ge}$, which is a consequence of the weighted inner product.  We first compute
\begin{align}\label{eq:QVev}
\begin{split}
\tfrac{d}{dt} \brk{\int_X Q(x,t)^2 V_{\ge}(x) \,\dV_x} &= 2 \int_X Q(x,t) \tfrac{\partial}{\partial t}\brk{Q(x,t)} V_{\ge}(x) \,\dV  \\
&= 2 \int_X Q(x,t) \prs{ \Delta_0 - \tfrac{1}{6}R} Q(x,t) \,\dV \\
&= -2 \int_X \brk{ \brs{\nabla Q(x,t)}^2 + \tfrac{1}{6} R(x) Q(x,t)^2 } \,\dV \\
&\leq 0.
\end{split}
\end{align}
Integrating this and applying \eqref{eq:Qevolution},
\begin{align*}\begin{split}
\int_X Q(x,t)^2 V_{\ge}(x) \,\dV &\leq \int_X Q(x,0)^2 V_{\ge}(x) \,\dV \\
&= \int_X V_{\ge}(x)^2 \,\dV.
\end{split}
\end{align*}

Now, using \eqref{Qdef},
\begin{align*}
\int_X Q(x,t)^2 V_{\ge}(x) \,\dV = \int_X V_{\ge}(x) \prs{ \int_X H_0(x,y,t) V_{\ge}(y)^{3/2}\dV_y }^2 \,\dV_x,
\end{align*}
and so substituting into \eqref{eq:QVev} we obtain
\begin{align*}
\| V_{\ge} \|_{L^2} \geq \brk{ \int_X V_{\ge}(x)\prs{ \int_X H_0(x,y,t) V_{\ge}(y)^{3/2} \dV_y }^2 \, \dV_x }^{1/2}.
\end{align*}
Substituting this into \eqref{hup}, we have
\begin{align*}
h(t) \leq \brk{\int_X V_{\ge}(x) \prs{ \int_X H_0(x,y,t)^4 \dV_y }^{1/2} \dV_x }^{2/3} \| V_{\ge} \|_{L^2}^{2/3}.
\end{align*}
By \eqref{dhdt1}, we conclude
\begin{align} \label{hp1}
\frac{dh}{dt} \leq - \tfrac{1}{3} \frac{ \cY(X^4, [g]) }{ \brs{\brs{ V_{\ge} }}_{L^2}}  h(t)^{3/2}.
\end{align}
Integrating and using the fact that $h(t) \rightarrow \infty$ as $t \rightarrow 0^{+}$ we conclude
\begin{align*}
h(t) \leq  \frac{ 36 \brs{\brs{ V_{\ge} }}_{L^2}^2}{ \cY(X^4, [g])^2 }  t^{-2},
\end{align*}
which is equivalent to (\ref{ht}).
\end{proof}

The key lemma that allows us to pass from Lemma \ref{HeatLemma} to Theorem \ref{NestProp} is the following:

\begin{lemma} \label{KeyLemma}  We have
\begin{align} \label{Keycomp}
\tr e^{t \mathcal{P}}  \leq  \rank (\mathcal{E})  \tr e^{t \mathcal{P}_0}.
\end{align}
\end{lemma}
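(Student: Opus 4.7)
The plan is to derive the trace comparison from a pointwise operator-norm bound on the heat kernels, in the spirit of Donnelly--Li \cite{DL}: Kato's inequality will force the heat semigroup on sections of $\mathcal{E}$ to be dominated by the scalar heat semigroup of $\mathcal{P}_0$, and a Hilbert--Schmidt estimate combined with the semigroup identity will then convert this into the trace inequality \eqref{Keycomp}.

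First I would establish the pointwise Kato inequality: for any smooth section $\sigma$ of $\mathcal{E}$, one has $\brs{\sigma}\Delta_0 \brs{\sigma} \geq \ip{\sigma, \Delta \sigma}$ wherever $\sigma \neq 0$. This is immediate from the identity $\Delta_0 \brs{\sigma}^2 = 2\ip{\Delta \sigma, \sigma} + 2\brs{\N \sigma}^2$ combined with the Cauchy--Schwarz bound $\brs{\N\brs{\sigma}}^2 \leq \brs{\N \sigma}^2$. Regularizing by $\sqrt{\brs{\sigma}^2 + \eta^2}$ and sending $\eta \to 0$ upgrades this to a distributional inequality on all of $X$. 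Applied to a solution $u(y,t)$ of $\del_t u = \mathcal{P} u$, the resulting estimate reads
\begin{align*}
V_{\ge}\,\del_t \brs{u} = \frac{\ip{(\Delta - \tfrac{1}{6}R) u, u}}{\brs{u}} \leq \Delta_0 \brs{u} - \tfrac{1}{6}R \brs{u},
\end{align*}
so $\brs{u}$ is a (weak) subsolution of the scalar heat equation governed by $\mathcal{P}_0$. Positivity of $H_0$ and the parabolic maximum principle then yield the semigroup domination
\begin{align*}
\brs{(e^{t\mathcal{P}} f)(x)} \leq (e^{t \mathcal{P}_0} \brs{f})(x).
\end{align*}

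Specializing to smooth sections approximating a delta concentrated at $y$ in a unit direction $v \in \mathcal{E}_y$ extracts the pointwise operator-norm bound $\brs{H(x,y,t) v}_{\mathcal{E}_x} \leq H_0(x,y,t) \brs{v}$, and hence the Hilbert--Schmidt estimate $\nm{H(x,y,t)}{HS}^2 \leq \rank(\mathcal{E})\, H_0(x,y,t)^2$. Since $\mathcal{P}$ is self-adjoint in $L^2(V_{\ge}\,\dV)$ we have $H(y,x,t) = H(x,y,t)^*$, so the semigroup identity gives
\begin{align*}
\trace_{\mathcal{E}_x} H(x,x,t) = \int_X \nm{H(x,y,t/2)}{HS}^2 V_{\ge}(y)\, \dV_y \leq \rank(\mathcal{E}) \int_X H_0(x,y,t/2)^2 V_{\ge}(y)\, \dV_y = \rank(\mathcal{E})\, H_0(x,x,t),
\end{align*}
and integrating against $V_{\ge}\, \dV_x$ produces \eqref{Keycomp}.

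The main obstacle is the technical treatment of the zero set of $\brs{u}$ needed to promote the classical Kato identity to a global distributional subsolution property strong enough to invoke the maximum principle, together with the approximation argument justifying the passage from the semigroup domination on smooth initial data to a pointwise bound on $H(x,y,t)$ along a single direction $v \in \mathcal{E}_y$.
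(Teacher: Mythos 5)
Your proposal is correct and rests on the same foundational idea as the paper: Kato's inequality (Donnelly--Li Lemma 4.1) makes $|u|$ a subsolution of the scalar heat equation for $\mathcal{P}_0$, from which one extracts a pointwise comparison of the vector and scalar heat kernels and then the trace inequality. The technical execution differs in two places, however. To pass from the subsolution property to the kernel comparison, the paper runs a Duhamel-type computation directly on the kernels, writing $|H|(x,y,t)-H_0(x,y,t)=\int_0^t\int_X\bigl(\tfrac{\partial}{\partial s}-\mathcal{P}_0\bigr)|H|\,H_0\,V_{\ge}\,\dV_z\,\ds\leq 0$, which uses only the subsolution property and positivity of $H_0$ and avoids both the parabolic maximum principle and the approximate-delta limiting argument that you flag as your main technical burden. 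And in the final step the paper bounds $\mbox{tr}_g\,H(x,x,t)\leq\rank(\mathcal{E})\,|H|(x,x,t)$ directly, using that $H(x,x,t)$ is positive semi-definite, rather than passing through the Hilbert--Schmidt norm and the Chapman--Kolmogorov identity at time $t/2$ as you do; your route is also valid and gives the identical constant, but the extra splitting and self-adjointness argument is unnecessary once one has $|H|(x,x,t)\leq H_0(x,x,t)$ pointwise. In short, your argument is sound, but the Duhamel identity gives a shorter path from Kato to the kernel comparison and sidesteps the regularity issues around the zero set of $|u|$ and the delta-approximation that you correctly identify as the delicate points of your version.
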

\begin{proof}  This is based on argument in \cite{DL}, Theorem 4.3 and Corollary 4.4.  Let $H(x,y,t)$ denote the heat kernel associated to $\mathcal{P}$ with respect to the weighted inner product of Lemma \ref{HeatLemma}.  More precisely, let $\mu_1 \leq \mu_2 \leq \cdots$ denote the eigenvalues of $-\mathcal{P}$, counted with multiplicity, and let $\{ \phi_i \}$ be an orthonormal basis of sections of  $L^2(\mathcal{E},V_{\ge} \dV)$ consisting of eigenfunctions of $-\mathcal{P}$:
\begin{align*}
- \mathcal{P} \phi_i = \mu_i  \phi_i,
\end{align*}
with
\begin{align*}
\int_X \ip{ \phi_i(x) , \phi_j(x) } V_{\ge}(x) \,\dV_x = \delta_{ij}.
\end{align*}
Then the associated heat kernel is given by
\begin{align*}
H \prs{x,y,t} = \sum_{i=1} e^{-t \mu_i} \phi_i \prs{x} \otimes \phi_i \prs{y}.
\end{align*}
If $|H|$ denotes the norm of $H$ as an endomorphism $H(\cdot,x,y) : \mathcal{E}_x \rightarrow \mathcal{E}_y$, then $\brs{H}$ is a subsolution of (\ref{Heat}) (in the sense of distributions):
\begin{align} \label{Heat2} \begin{split}
\tfrac{\partial}{\partial t}\brk{\brs{H}\prs{x,y,t}} &\leq \mathcal{P}_0 \brs{H}\prs{x,y,t} \\
&= \tfrac{1}{V_{\ge}} \prs{ \Delta_0 \brs{H} - \tfrac{1}{6}R }  \brs{H}\prs{x,y,t},
\end{split}
\end{align}
see Lemma 4.1 of \cite{DL}.  Also, in analogy with \eqref{Delta}, for any $f \in C^0\prs{X^4}$ we have
\begin{align}\label{Delta1}
\lim_{t \to 0} \int_X \brs{H\prs{x,y,t}} f\prs{y} V_{\ge}\prs{y} \, \dV_y = f\prs{x}.
\end{align}
By \eqref{Delta} and \eqref{Delta1},
\begin{align} \label{DP1} \begin{split}
\brs{H}\prs{x,y,t} &- H_0\prs{x,y,t} \\
&= \lim_{\tau \rightarrow 0} \sqg{ \int_{X} |H|(x,z,t) H_0(z,y,\tau) V_{\ge}(z) \,\dV_z - \int_{X} \brs{H}\prs{x,z,\tau} H_0\prs{z,y,t} V_{\ge}\prs{z} \, \dV_z  } \\
&= \int_0^t \tfrac{d}{ds} \brk{\int_{X} \brs{H}\prs{x,z,s} H_0\prs{z,y,t-s} V_{\ge}(z) \,\dV_z} \ds \\
&= \brk{\int_0^t \brk{\int_{X} \tfrac{d}{ds}\brk{\brs{H}\prs{x,z,s}} H_0\prs{z,y,t-s} V_{\ge}(z) \,\dV_z} \ds}_{T_1}\\
&\qquad +  \brk{ \int_0^t \brk{\int_{X} \brs{H}\prs{x,z,s}\tfrac{d}{ds}\brk{ H_0\prs{z,y,t-s}} V_{\ge}(z) \,\dV_z} \ds }_{T_2}.
\end{split}
\end{align}
We manipulate the second term $T_2$ using \eqref{Heat},
\begin{align} \label{DP2} \begin{split}
T_2
&= \int_0^t \int_{X} \brs{H}\prs{x,z,s} \tfrac{\partial}{\partial s} H_0\prs{z,y,t-s} V_{\ge}(z) \, \dV_z \ds \\
&=- \int_0^t \int_{X}  \brs{H}(x,z,s) \tfrac{1}{V_{\ge}(z)} \prs{ \Delta_0 - \tfrac{1}{6}R } H_0\prs{z,y,t-s} V_{\ge}(z) \,\dV_z \ds\\
&= - \int_0^t \int_{X}  \brs{H}\prs{x,z,s}  \Delta_0 H_0\prs{z,y,t-s}  \,\dV_z \ds + \int_0^t \int_{X} \tfrac{\partial}{\partial s} \brs{H}\prs{x,z,s} \tfrac{1}{6}R\prs{z} H_0\prs{z,y,t-s} \,\dV_z \ds.
\end{split}
\end{align}
Integrating by parts in the term involving $\Delta_0$ and using \eqref{Heat2}, reincorporating $T_2$ into \eqref{DP1},
\begin{align} \label{DP3} \begin{split}
|H|(x,y,t) - H_0(x,y,t) &= \int_0^t \int_{X} \prs{ \tfrac{\partial}{\partial s}  - \mathcal{P}_0 } |H|(x,z,s) H_0(z,y,t-s) q_{\ge}(z) \,\dV_z \leq 0.
\end{split}
\end{align}
Therefore, if $\mbox{tr}_g H$ denotes the pointwise trace of $H(\cdot,x,x) : \mathcal{E}_x \rightarrow \mathcal{E}_x$,
\begin{align*}
\tr e^{t \mathcal{P}} &= \int_{X} \mbox{tr}_g H(x,x,t) V_{\ge}(x) \,\dV_x \\
&\leq  \mbox{rank}(\mathcal{E}) \int_{X} |H|(x,x,t) V_{\ge}(x) \,\dV_x \\
&\leq  \mbox{rank}(\mathcal{E}) \int_{X} H_0(x,x,t) V_{\ge}(x) \,\dV_x \\
&= \mbox{rank}(\mathcal{E})\ \tr e^{t \mathcal{P}_0}.
\end{align*}
The result follows.
\end{proof}

Combining Lemma \ref{HeatLemma} with Lemma \ref{KeyLemma} we have
\begin{prop} \label{comprop} Let $\mu_1 \leq \mu_2 \leq \cdots$ denote the eigenvalues of $-\mathcal{P}$, counted with multiplicity.  Then for all $t > 0$,
\begin{align} \label{ht1E}
\sum_{i=1}^{\infty} e^{-2\mu_i t} \leq  \frac{ 36  \rank (\mathcal{E})  \brs{\brs{ V_{\ge} }}_{L^2}^2 } {\cY(X^4, [g])^2 }  t^{-2}.
\end{align}

\begin{proof}
Observe that
\begin{align} \label{2t}
\sum_{i=1}^{\infty} e^{-2\mu_i t} = \prs{ \tr e^{\prs{2t} \mathcal{P}} }.
\end{align}
But by Lemma \ref{KeyLemma},
\begin{align} \label{htr}
\big( \tr e^{(2t) \mathcal{P}}\big)  &\leq \rank (\mathcal{E})\ \big( \tr e^{(2t) \mathcal{P}_0}\big) = \rank (\mathcal{E}) \sum_{i=1}^{\infty} e^{-2\mu_i^0 t}.
\end{align}
Thus the result follows from Lemma \ref{HeatLemma}.
\end{proof}
\end{prop}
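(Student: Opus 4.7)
The plan is to combine Lemma \ref{HeatLemma} and Lemma \ref{KeyLemma} by identifying the exponential sum $\sum_{i=1}^\infty e^{-2\mu_i t}$ as the trace of the heat semigroup $e^{2t\mathcal{P}}$ on the appropriate weighted Hilbert space, and then using the bundle-to-scalar domination to reduce to the scalar heat-trace bound that has already been established.

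First I would observe that the orthonormal sections $\{\phi_i\}$ constructed in the proof of Lemma \ref{KeyLemma} form an orthonormal basis of the weighted Hilbert space $L^2(\mathcal{E}, V_\ge \dV)$ consisting of eigensections of $-\mathcal{P}$ with eigenvalues $\mu_i$. By the spectral decomposition for trace-class semigroups acting on this weighted space, one has for any $s > 0$ the identity
\[
\tr e^{s\mathcal{P}} = \sum_{i=1}^\infty e^{-s\mu_i}.
\]
Setting $s = 2t$ identifies the left-hand side of \eqref{ht1E} with $\tr e^{2t\mathcal{P}}$.

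Next I would invoke Lemma \ref{KeyLemma} with time parameter $2t$ in place of $t$ to obtain
\[
\tr e^{2t\mathcal{P}} \;\leq\; \rank(\mathcal{E})\, \tr e^{2t\mathcal{P}_0} \;=\; \rank(\mathcal{E}) \sum_{i=1}^\infty e^{-2\mu_i^0 t},
\]
where the second equality is the analogous spectral identity applied to the scalar operator $\mathcal{P}_0$ with eigenvalues $\mu_i^0$. Finally, Lemma \ref{HeatLemma} bounds $\sum_{i=1}^\infty e^{-2\mu_i^0 t}$ above by $36\, \|V_\ge\|_{L^2}^2\, \cY(X^4,[g])^{-2}\, t^{-2}$, and multiplying through by $\rank(\mathcal{E})$ yields exactly the claimed estimate \eqref{ht1E}.

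There is no substantial obstacle here: the two pieces of real work, namely the scalar heat-trace bound and the bundle-to-scalar comparison, have already been isolated in Lemmas \ref{HeatLemma} and \ref{KeyLemma}, and the proposition follows by chaining them at parameter $2t$. The only subtlety worth flagging is consistency of conventions: the traces in Lemma \ref{KeyLemma} must be computed against the same weighted pairing $V_\ge\, \dV$ used to define the orthonormal bases $\{\psi_i\}$ and $\{\phi_i\}$, so that the spectral identity producing the sums $\sum e^{-2\mu_i^0 t}$ and $\sum e^{-2\mu_i t}$ applies directly on both sides of the comparison.
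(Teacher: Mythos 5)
Your proposal is correct and follows essentially the same argument as the paper: identify $\sum_i e^{-2\mu_i t}$ with $\tr e^{2t\mathcal{P}}$ via the spectral decomposition in the weighted space $L^2(\mathcal{E}, V_\ge\,\dV)$, apply Lemma \ref{KeyLemma} at parameter $2t$, and invoke Lemma \ref{HeatLemma}. The remark about keeping the weighted pairing consistent across the two lemmas is a helpful check but is already implicit in the paper's setup.
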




\begin{cor}  \label{Thm2Prop} Let $\mu_k$ denote the $k^{th}$-eigenvalue of $-\mathcal{P}$. Then
\begin{align} \label{muk}
\dfrac{ 36 e^2 \rank (\mathcal{E})   \| V_{\ge} \|_{L^2}^2 } {\cY(X^4, [g])^2 } \mu_k^2 \geq k.
\end{align}
\end{cor}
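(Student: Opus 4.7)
The plan is to run the standard heat-trace-to-eigenvalue-counting argument on the bound supplied by Proposition \ref{comprop}. First, order the eigenvalues as $0 < \mu_1 \leq \mu_2 \leq \cdots$. By monotonicity one has
\begin{align*}
k\, e^{-2\mu_k t} \leq \sum_{i=1}^k e^{-2\mu_i t} \leq \sum_{i=1}^\infty e^{-2\mu_i t}
\end{align*}
for every $t > 0$, and Proposition \ref{comprop} then gives
\begin{align*}
k \leq C\, t^{-2} e^{2\mu_k t}, \qquad C := \frac{36\, \rank(\mathcal{E})\, \nm{V_{\ge}}{L^2}^2}{\cY(X^4,[g])^2}.
\end{align*}

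Next, I would optimize the right-hand side in $t > 0$. Writing it as $C\,\exp(2\mu_k t - 2\log t)$, the exponent is minimized at the unique critical point $t^{\ast} = 1/\mu_k$, at which $(t^{\ast})^{-2} e^{2\mu_k t^{\ast}} = e^{2}\mu_k^{2}$. Substituting $t = t^{\ast}$ produces exactly the claimed inequality (\ref{muk}).

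The only point requiring verification is that $\mu_k > 0$, so that the choice $t^{\ast} = 1/\mu_k$ is admissible. This follows because $-\mathcal{P}$ is strictly positive in the weighted inner product $L^2(\mathcal{E}, V_{\ge}\,\dV)$: an integration by parts gives $\langle -\mathcal{P}\phi,\phi\rangle_{V_{\ge}} = \int_X (|\nabla \phi|^2 + \tfrac{1}{6}R|\phi|^2)\,\dV \geq 0$, with equality only for $\phi \equiv 0$ under the standing assumptions $R \geq 0$ and $\cY(X^4,[g]) > 0$. Beyond this small bookkeeping I do not anticipate any obstacle; the argument is the textbook passage from a Gaussian-type heat-trace bound to an eigenvalue counting estimate, all the genuine work having been absorbed into Lemmas \ref{HeatLemma} and \ref{KeyLemma}.
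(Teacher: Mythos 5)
Your proof is correct and is essentially the paper's own argument: the paper likewise substitutes $t = 1/\mu_k$ into the heat-trace bound of Proposition \ref{comprop} and then uses $\sum_{i=1}^{\infty} e^{-2\mu_i/\mu_k} \geq \sum_{i=1}^{k} e^{-2\mu_i/\mu_k} \geq k e^{-2}$. Your extra step of deriving $t^{\ast}=1/\mu_k$ by optimizing $t\mapsto t^{-2}e^{2\mu_k t}$ merely motivates the choice the paper makes directly, and your remark on the positivity of $\mu_k$ (which the paper leaves implicit) is harmless bookkeeping.
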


\begin{proof}  As in \cite{LY}, take $t = \frac{1}{\mu_k}$ in \eqref{ht1E}, then
\begin{align*}
\dfrac{ 36 \rank (\mathcal{E})   \| V_{\ge} \|_{L^2}^2 } {\cY(X^4, [g])^2 } \mu_k^2 &\geq \sum_{i=1}^{\infty} \exp(- 2 \tfrac{\mu_i}{\mu_k}) \\
&\geq \sum_{i=1}^{k} \exp(- 2 \tfrac{\mu_i}{\mu_k}) \\
&\geq k e^{-2}.
\end{align*}
The result follows.
\end{proof}

\medskip

\begin{proof}[Proof of Theorem \ref{NestProp}]  By the argument of Birman--Schwinger, the number of non-positive eigenvalues of the operator $-\Delta + \frac{1}{6}R + V_{\ge}$  is less than or equal to the number of eigenvalues of the operator $-\mathcal{P} = \frac{1}{V_{\ge}} (-\Delta + \frac{1}{6} R)$ that are less than or equal to $1$.   But by (\ref{muk}), if $\mu_k$ the greatest eigenvalue of $-\mathcal{P}$ that is less than or equal to $1$, then
\begin{align*} \begin{split}
k 
&\leq \dfrac{ 36 e^2 \rank (\mathcal{E}) \| V_{\ge} \|_{L^2}^2 } {\cY(X^4, [g])^2 }.
\end{split}
\end{align*}
Therefore, taking $\epsilon \rightarrow 0$ we conclude
\begin{align*}
N_0(\mathcal{S}) \leq \dfrac{ 36 e^2 \rank (\mathcal{E}) \| V \|_{L^2}^2 } {\cY(X^4, [g])^2 },
\end{align*}
which completes the proof. \end{proof}

\begin{rmk} \label{r:gendim} If $\mathcal{E} \rightarrow (X^n, g)$ is a vector bundle, $n \geq 3$, and $\mathcal{S} = -\Delta + V$ is a linear operator acting on sections of $E$ with $V \geq 0$, then the preceding arguments can easily be adapted to give an estimate for the number of non-positive eigenvalues of $\mathcal{S}$.  If $C_S(g)$ denotes the Sobolev constant,
\begin{align*}
C_S(g) \prs{ \int_{X} |f|^{\frac{2n}{n-2}} \, \dV }^{\frac{n-2}{n}} \leq \int_X \brk{ |\nabla f|^2 + f^2 } \, \dV,
\end{align*}
then
\begin{align*}
N_0(\mathcal{S}) \leq c_n \dfrac{ \rank (\mathcal{E}) }{ C_S(g)^{\frac{n}{2}}}  \| (1 + V) \|_{L^{n/2}}^{n/2}.
\end{align*}
\end{rmk}

\section{Index estimate for Yang-Mills connections}
\subsection{Background}  \label{conSec}

Let $(E,h) \to (X^n,g)$ be a vector bundle with metric over a closed Riemannian manifold with structure group $G \subset \SO(E)$. Let
$\Gamma(E)$ denote the smooth
sections of $E$, and $\mathfrak{g}_E$ denote the associated Lie algebra of $E$. For each point $x \in X^n$ choose a local orthonormal basis of
$TX^n$ given by $\{ e_i \}$ with dual basis $\{ e^i \}$ and a local basis for    
$E$ given by $\{ \mu_{\ga} \}$ with dual basis $\{ (\mu^*)^{\ga} \}$ of the
dual $E^*$.   Let $\Lambda^p$ denote the space of smooth $p$-forms over $X$ and set $\Lambda^p(E)
:= \Lambda^p \otimes \Gamma(E)$.  Given an element in $\Lambda^p(E)$ its components are understood be with respect to the forgoing bases.  We will also use the fact that when $p=1$, we can take tensor products of the basis elements $\{e^i\}, \{ \mu_{\ga} \}, \{ (\mu^*)^{\ga} \}$ to obtain a (local) basis of $\Lambda^1(E)$.

We will use the following conventions for the various inner products that appear:
\begin{align*}
\ip{\eta,\gw}_{\Lambda^2} &= \tfrac{1}{2} \sum_{i,j} \eta_{ij} \gw_{ij}, \qquad \ip{\nu,\mu}_{S^2_0(X)} =  \sum_{i,j}\nu_{ij} \mu_{ij}, \\
\ip{A,B}_{\mathfrak{g}_E} &= - \tfrac{1}{2} \trace_E \prs{AB} = - \tfrac{1}{2} \sum_{\ga,\gb} A^{\gb}_{\ga} B^{\ga}_{\gb}\\
\ip{P, Q}_{\Lambda^1 \prs{\mathfrak{g}_E}} &= - \tfrac{1}{2} P_{i \gb}^{\ga} Q_{i \ga}^{\gb}, \qquad \ip{R, S}_{\Lambda^2 \prs{\mathfrak{g}_E}} = - \tfrac{1}{4} R_{ij \gb}^{\ga} S_{ij \ga}^{\gb}.
\end{align*}
Here, repeated Latin indices indicate contractions by the metric $g$ on $X^n$, and the components are with respect to the orthonormal basis above.  Unless specified otherwise, we will use Einstein summation notation for both bundle and base components.

We need certain algebraic actions as well.  First there is the bracket operation $[,] : \Lambda^1 (\mathfrak{g}_E) \times \Lambda^1 (\mathfrak{g}_E) \to \Lambda^2(\mathfrak{g}_E)$ defined by
\begin{align*}
\brk{A, B}_{jk \ga}^{\gb} &:= A_{j \gd}^{\gb} B_{k \ga}^{\gd} - B_{k\gd}^{\gb} A_{j \ga}^{\gd}, \qquad A, B \in \Lambda^1 (\mathfrak{g}_E).
\end{align*}
Also, given $\eta \in S^2 \prs{TX}$ and $\Phi \in \Lambda^2 \prs{\mathfrak{g}_E}$, we may view both as elements of $\End (\Lambda^1(\mathfrak{g}_E))$ via the formulas
\begin{align*}
\prs{\eta\prs{A}}_{i \ga}^{\gb} &= \eta_{ij} A_{j\ga}^{\gb}, \\
\prs{ \brk{\Phi, A} }_{i \ga}^{\gb} &=  \brk{\Phi_{ji}, A_j}_{\ga}^{\gb} = \Phi_{ji \mu}^{\gb} A_{j \ga}^{\mu} - A_{j \mu}^{\gb} \Phi_{ji \ga}^{\mu}.
\end{align*}
We next recall the definition of the Jacobi operator of $\mathcal{YM}$.

\begin{customthm}{6.8 of \cite{BL}}\label{BL6.8}  \label{JacobiThm} Suppose $\nabla$ is a Yang--Mills connection on  a vector bundle $E$ over $X^n$ with structure group $G \subset \SO(E)$, and $\sqg{\N_s}$ is a one parameter family of connections with $\N \equiv \left. \N_s \right|_{s=0}$. Furthermore, suppose $B := \left. \tfrac{\del}{\del s} \brk{\N_s} \right|_{s=0} \in \Lambda^1(\mathfrak{g}_E)$. Then
\begin{align*}
\left. \tfrac{d^2}{d s^2 }\brk{\mathcal{YM}\prs{\N_s}} \right|_{s=0} &= 2 \int_X \ip{\mathcal{J}^{\N} \prs{B}, B}_{\Lambda^1(\mathfrak{g}_E)} \dV,
\end{align*}
where
\begin{align*}
\begin{split}
\mathcal{J}^{\N} \prs{B}_i
&= -\lap B_i - \N_i \N^j B_j + 2 \brk{F_{ji}, B^j} + \Ric_i^j B_j,
\end{split}
\end{align*}
where $\lap = \N^a \N_a$ denotes the rough Laplacian.
\end{customthm}
The operator $\mathcal{J}^{\N}$ is degenerate elliptic, due to the action of the infinite dimensional gauge group.  Questions of index and nullity always refer to the operator restricted to divergence-free sections $B$, one which the operator takes the simpler form:
\begin{align}
\begin{split}\label{eq:Jacobioperator2}
\mathcal{J}^{\N} \prs{B}_i &= -\lap B_i + 2 \brk{F_{ji}, B^j} + \Ric_i^j B_j.
\end{split}
\end{align}
The index and nullity of a Yang-Mills connection are understood to be those quantities associated to this operator.  It follows from the conformal invariance of the Yang-Mills energy that both the index and nullity are conformally invariant.

\subsection{Linear algebraic estimates}

In this subsection we obtain linear algebraic estimates which enter into estimating the Jacobi operator.  The key point is Proposition \ref{Bsharp}, which provides a sharp inequality between the operator and Hilbert-Schmidt norms of the bilinear form appearing in the Jacobi operator.  Let $\cZ \in S_0^2 \prs{TX}$ and $\Phi \in \Lambda^2 \prs{\mathfrak{g}_E}$; in the following we can view both as elements of $\End (\Lambda^1(\mathfrak{g}_E))$.

\begin{lemma} \label{Bsymm}  Suppose $E \to \prs{X^n,g}$ is a vector bundle. Then $\cZ$ and $\Phi$, viewed as endomorphisms of $\Lambda^1\prs{\mathfrak{g}_E }$, are symmetric.  Moreover, $\cZ$ is trace-free as an endomorphism of $\Lambda^1(\mathfrak {g}_E)$.
\begin{proof} Take $A,B \in \Lambda^1 (\mathfrak{g}_E)$. Using the symmetry of both $\cZ$ and the inner product on $E$,
\begin{align*}
\ip{\cZ \prs{A}, B}_{\Lambda^1 (\mathfrak{g}_E)} &=- \tfrac{1}{2} \cZ_{ij} A_{j\ga}^{\gb} B_{i \gb}^{\ga} \\
&= - \tfrac{1}{2} \cZ_{ji} B_{i \ga}^{\gb} A_{j\gb}^{\ga} \\
&= \ip{\cZ \prs{B}, A}_{\Lambda^1 (\mathfrak{g}_E)}.
\end{align*}
The symmetry of $\cZ$ follows. Next, using the cyclicity of inner products over $\mathfrak{g}_E$, reindexing and skew symmetry of the bracket operation and $\Phi$,
\begin{align*}
\ip{ \brk{\Phi,A}, B}_{\Lambda^1 (\mathfrak{g}_E)} &= - \tfrac{1}{2} \brk{\Phi_{ij}, A_i}^{\gb}_{\ga} B_{j \gb}^{\ga} \\
&= - \tfrac{1}{2} \Phi_{ij \gd}^{\gb} A_{i \ga}^{\gd} B_{j \gb}^{\ga} + \tfrac{1}{2}  A_{i \gd}^{\gb} \Phi_{ij \ga}^{\gd} B_{j \gb}^{\ga} \\
&= - \tfrac{1}{2} A_{i \ga}^{\gd} B_{j \gb}^{\ga}\Phi_{ij \gd}^{\gb}  + \tfrac{1}{2}  A_{i \gd}^{\gb} \Phi_{ij \ga}^{\gd} B_{j \gb}^{\ga}\\
&= - \tfrac{1}{2} A_{i \gd}^{\gb} B_{j \ga}^{\gd}\Phi_{ij \gb}^{\ga}  + \tfrac{1}{2}  A_{i \gd}^{\gb} \Phi_{ij \ga}^{\gd} B_{j \gb}^{\ga}\\
&= - \tfrac{1}{2} A_{i \gd}^{\gb} \brk{B_{j}, \Phi_{ij}}_{\gb}^{\gd}\\
&= - \tfrac{1}{2} A_{i \gd}^{\gb} \brk{\Phi_{ji},B_{j}}_{\gb}^{\gd}\\
&= \ip{ \brk{\Phi,B}, A}_{\Lambda^1 (\mathfrak{g}_E)},
\end{align*}
hence $\Phi$ is symmetric as an endomorphism.
 
To show that $\cZ$ is trace-free as an operator on $\Lambda^1(\mathfrak{g}_E)$, we construct an orthonormal basis for $\Lambda^1(\mathfrak{g}_E)$ as described at the beginning of Section \ref{conSec}: for fixed $(k,\ga,\gb)$, let
\begin{equation}\label{eq:Adefn}
A_{(k,\ga, \gb)} := e^k \ten \prs{\mu^*}^{\ga} \ten \mu_{\gb}, 
\end{equation}
where $\sqg{e_i}$ is a basis of $TM$ that diagonalizes $\cZ$. Note that the components of these basis elements are given by 
\begin{align*}
\prs{A_{(k,\ga, \gb)}}_{\ell \mu}^{\nu} = \delta_{k \ell} \delta_{\alpha}^{\nu} \delta_{\mu}^{\gb}, \qquad \ga \neq \gb,
\end{align*}
so the only nonzero entry is the $(k,\ga, \gb)$-component. Computing the trace of $\cZ$ with respect to this basis yields
\begin{align*}
\ip{\cZ (A_{(k,\ga, \gb)}) , A_{(k,\ga, \gb)} }_{\Lambda^1 (\mathfrak{g}_E)} &= - \tfrac{1}{2} \cZ_{ij} \prs{A_{(k,\ga, \gb)}}_{i \mu}^{\nu} \prs{A_{(k,\ga, \gb)}}_{j \nu}^{\mu} \\
&=  - \tfrac{1}{2} \cZ_{ij} \delta_{k i} \delta_{\alpha}^{\nu} \delta_{\mu}^{\gb} \delta_{k j} \delta_{\alpha}^{\mu} \delta_{\nu}^{\gb} \\
&=  - \tfrac{1}{2} \cZ_{ii} \delta_{\alpha}^{\gb } \delta_{\ga}^{\gb}\\
&= 0,
\end{align*}
since $\cZ$ is traceless on $TM$. The result follows.
\end{proof}
\end{lemma}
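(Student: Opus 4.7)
The plan is to prove each of the three assertions by direct index computation under the convention $\langle P, Q\rangle_{\Lambda^1(\mathfrak{g}_E)} = -\tfrac{1}{2} P_{i\beta}^\alpha Q_{i\alpha}^\beta$, exploiting in turn the symmetry of $\cZ_{ij}$ on $TX$, the skew-symmetries of $\Phi$ in its Latin and bundle indices, and the trace-free condition on $\cZ$.

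For $\cZ$ symmetric, I would take $A, B \in \Lambda^1(\mathfrak{g}_E)$, expand $\langle \cZ(A), B\rangle = -\tfrac{1}{2}\cZ_{ij} A_{j\alpha}^\beta B_{i\beta}^\alpha$, apply $\cZ_{ij} = \cZ_{ji}$, and relabel the dummy Latin indices $i \leftrightarrow j$ to recover $\langle \cZ(B), A\rangle$; this is pure bookkeeping. For $\Phi$ symmetric the calculation is more delicate: expand
\begin{align*}
([\Phi, A])_{i\alpha}^\beta = \Phi_{ji\mu}^\beta A_{j\alpha}^\mu - A_{j\mu}^\beta \Phi_{ji\alpha}^\mu
\end{align*}
and pair with $B$, producing two terms. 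Using cyclicity of the $\mathfrak{g}_E$-pairing to slide the $A$-factor past the $B$-factor, together with the skew-symmetries $\Phi_{ji} = -\Phi_{ij}$ (as a 2-form) and $\Phi_{ij\gamma}^\delta = -\Phi_{ij\delta}^\gamma$ (as an element of $\mathfrak{g}_E \subset \mathfrak{so}(E)$), the expression should reassemble as $\langle [\Phi, B], A\rangle$. I expect this to be the main obstacle, since the Latin and bundle contractions must be manipulated in tandem; a careless relabeling would introduce a spurious sign or couple indices incorrectly.

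For trace-freeness the cleanest observation is that $\cZ$ acts on $\Lambda^1(\mathfrak{g}_E) \simeq T^*X \otimes \mathfrak{g}_E$ only through the $T^*X$ factor, since $(\cZ A)_{i\alpha}^\beta = \cZ_{ij} A_{j\alpha}^\beta$. The endomorphism is therefore $\cZ \otimes \Id_{\mathfrak{g}_E}$, whence
\begin{align*}
\trace_{\Lambda^1(\mathfrak{g}_E)}\cZ = \dim(\mathfrak{g}_E) \cdot \trace_{T^*X}\cZ = 0,
\end{align*}
by the hypothesis $\cZ \in S^2_0(TX)$. A more hands-on alternative, in the spirit of the calculations above, is to fix a frame $\{e_i\}$ of $TX$ diagonalizing $\cZ$ with eigenvalues $\lambda_i$, take the tensor-product basis $A_{(k,\alpha,\beta)} = e^k \otimes (\mu^*)^\alpha \otimes \mu_\beta$ whose components are single Kronecker-delta entries, evaluate $\langle \cZ(A_{(k,\alpha,\beta)}), A_{(k,\alpha,\beta)}\rangle$ using the definition of the inner product, and sum over the allowed index ranges, recovering $\dim(\mathfrak{g}_E) \sum_k \lambda_k = 0$.
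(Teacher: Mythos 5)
Your proposal is correct, and for the two symmetry claims it follows essentially the same index-bookkeeping route as the paper: expand the $\Lambda^1(\mathfrak{g}_E)$ pairing in components, apply the symmetry of $\cZ_{ij}$, and for $\Phi$ chase the pairs of skew-symmetries (in the form indices and in the Lie-algebra indices) through the bracket and relabel. The one place where you genuinely diverge is the trace-free claim, and your primary argument there is cleaner than the paper's. The paper builds a basis $A_{(k,\alpha,\beta)} = e^k \otimes (\mu^*)^\alpha \otimes \mu_\beta$ (with $\{e_i\}$ diagonalizing $\cZ$), computes $\langle \cZ A_{(k,\alpha,\beta)}, A_{(k,\alpha,\beta)}\rangle$ termwise, and sums. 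Your observation that the action $(\cZ A)_{i\alpha}^\beta = \cZ_{ij} A_{j\alpha}^\beta$ touches only the $T^*X$ tensor factor, so that the endomorphism is literally $\cZ \otimes \Id_{\mathfrak{g}_E}$ on $T^*X \otimes \mathfrak{g}_E$ and hence $\operatorname{tr}_{\Lambda^1(\mathfrak{g}_E)} \cZ = \dim(\mathfrak{g}_E)\,\operatorname{tr}_{T^*X}\cZ = 0$, gets there in one line, is manifestly basis-free, and avoids the paper's slightly awkward restriction $\alpha \neq \beta$ in the bundle-index bookkeeping. Both are correct; your factorization argument buys brevity and robustness to the choice of conventions, at no cost.
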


\begin{lemma}\label{lem:PhiZorthog}
As operators on $\Lambda^1(\mathfrak{g}_E)$, the ranges of $\cZ$ and $\Phi$ are orthogonal subspaces.
\begin{proof}
The orthogonality of $\cZ$ and $\brk{\Phi,\cdot}$ will follow since $\cZ$ preserves the bundle components 
 while $\Phi$ is skew symmetric with respect to the bundle components.  Using the basis \eqref{eq:Adefn} as above, for fixed $(k,\alpha, \beta)$, then
\begin{align*}
\prs{\cZ \prs{A_{(k,\alpha, \beta)}}}_{i \mu}^{\nu} &=\cZ_{\ell i} \prs{A_{(k,\alpha, \beta)}}_{\ell \mu}^{\nu}\\
&=\cZ_{\ell i} \delta_{k \ell} \delta_{\alpha}^{\nu} \delta_{\mu}^{\gb}\\
&= \cZ_{k i} \delta_{\alpha}^{\nu} \delta_{\mu}^{\gb}\\
&= \begin{cases}
\cZ_{k i} & \text{ if } \mu = \ga, \gb = \nu, \\
0 & \text{ otherwise.}
\end{cases}.
\end{align*}
Similarly,
\begin{align*}
\brk{\Phi,A_{(k,\alpha, \beta)}}_{i \mu}^{\nu} & = \Phi_{\ell i \gd}^{\nu} \prs{A_{(k,\alpha, \beta)}}^{\gd}_{\ell \mu} - \prs{A_{(k,\alpha, \beta)}}^{\nu}_{\ell \gd} \Phi^{\gd}_{\ell i \mu}  \\
&=  \Phi_{\ell i \gd}^{\nu}  \delta_{k \ell} \delta_{\alpha}^{\delta} \delta_{\mu}^{\gb} -  \delta_{k \ell} \delta_{\alpha}^{\nu} \delta_{\gd}^{\gb} \Phi^{\gd}_{\ell i \mu} \\
&=  \Phi_{k i \ga}^{\nu} \delta_{\mu}^{\gb} - \delta_{\alpha}^{\nu} \Phi^{\gb}_{k i \mu} \\
&= \begin{cases}
0 & \ga = \nu \text{ and } \gb = \mu \\
-\Phi_{k i \mu}^{\gb} & \ga = \nu \text{ and } \gb \neq \mu \\
\Phi^{\nu}_{k i \ga} & \ga \neq \nu \text{ and } \gb = \mu \\
0 & \ga \neq \nu \text{ and } \gb \neq \mu
\end{cases}.
\end{align*}
Where here, we are noting that since $\Phi \in \Lambda^2 (\mathfrak{g}_E)$, its endomorphism indices cannot coincide.
\end{proof}
\end{lemma}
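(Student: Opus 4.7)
The plan is to compute the component representations of both operators acting on the generators $A_{(k,\alpha,\beta)} = e^k \otimes (\mu^*)^\alpha \otimes \mu_\beta$ introduced in Lemma \ref{Bsymm}, and to observe that their images occupy disjoint bundle-index positions in $\Lambda^1(\mathfrak{g}_E)$. The structural reason the orthogonality should hold is that $\cZ$ acts only on the spatial factor and leaves bundle indices untouched, while $[\Phi,\cdot]$ is an inner-derivation action on the bundle factor and is therefore ``trace-free'' in the bundle direction; these two behaviors populate complementary bundle slots.

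First I would compute $\cZ(A_{(k,\alpha,\beta)})$ explicitly. Substituting $(A_{(k,\alpha,\beta)})_{\ell\mu}^\nu = \delta_{k\ell}\delta_\alpha^\nu\delta_\mu^\beta$ into the defining contraction $\cZ_{\ell i}(A)_{\ell\mu}^\nu$ yields $\cZ(A_{(k,\alpha,\beta)})_{i\mu}^\nu = \cZ_{ki}\delta_\alpha^\nu\delta_\mu^\beta$, which is nonzero precisely at the bundle position $(\mu,\nu) = (\beta,\alpha)$. So $\cZ$ preserves this bundle slot of each generator.

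Next I would expand $[\Phi,A_{(k,\alpha,\beta)}]_{i\mu}^\nu$ via $\Phi_{\ell i\delta}^\nu(A)_{\ell\mu}^\delta - (A)_{\ell\delta}^\nu\Phi_{\ell i\mu}^\delta$ and simplify to $\Phi_{ki\alpha}^\nu\delta_\mu^\beta - \delta_\alpha^\nu\Phi_{ki\mu}^\beta$. The first term is supported only when $\mu=\beta$ and the second only when $\nu=\alpha$. The critical observation is that at the diagonal position $(\mu,\nu) = (\beta,\alpha)$, where both contribute, the expression reduces to $\Phi_{ki\alpha}^\alpha - \Phi_{ki\beta}^\beta$, and each term vanishes because $\Phi \in \Lambda^2(\mathfrak{g}_E)$ is skew-symmetric in its bundle indices. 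Hence the image of $[\Phi,A_{(k,\alpha,\beta)}]$ entirely avoids the position $(\beta,\alpha)$ and lives only in the ``off-diagonal'' slots where exactly one of $\mu=\beta$ or $\nu=\alpha$ holds.

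Combining the two calculations, the images of $\cZ$ and $[\Phi,\cdot]$ on each generator occupy complementary bundle-index components, from which the desired orthogonality follows. I expect the main obstacle to be the careful index bookkeeping in the commutator expansion---in particular, correctly isolating the ``diagonal'' contribution and exploiting the skew-symmetry of $\Phi$ to cancel it---since this step requires simultaneous tracking of the spatial index coming from the contraction and the two endomorphism indices coming from the bracket.
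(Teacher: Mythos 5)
Your proposal is correct and follows essentially the same path as the paper's proof: you compute the components of $\cZ(A_{(k,\alpha,\beta)})$ and $[\Phi,A_{(k,\alpha,\beta)}]$ in the tensor basis, find that $\cZ$ populates exactly the diagonal bundle slot $(\mu,\nu)=(\beta,\alpha)$, and that $[\Phi,\cdot]$ avoids that slot because the would-be diagonal contribution $\Phi_{ki\alpha}^{\alpha}-\Phi_{ki\beta}^{\beta}$ vanishes by the skew-symmetry of $\Phi$ in its bundle indices. This is the same computation and the same use of the skew-symmetry of $\mathfrak{g}_E$-valued forms that the paper employs.
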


To state our next result, we need to introduce an algebraic invariant defined by Bourguignon--Lawson.  Let
\begin{align*}
\gamma_{0} :=&\ \sup_{A,B \in \gG(\mathfrak{g}_E) \backslash \{0\}}
\tfrac{\brs{[A,B]}}{\brs{A} \brs{B}}.
\end{align*}
Lemma 2.30 of \cite{BL} gives the universal upper bound
\begin{align} \label{g02}
\gg_0 \leq \sqrt{2},
\end{align}
and characterizes the case of equality.

\begin{lemma} \label{SharpBracket} If $A \in \Lambda^1 (\mathfrak{g}_E)$, then
\begin{align}
\begin{split}\label{eq:AlambdaA,A}
\brs{\brk{A,A}}_{\Lambda^2(\mathfrak{g}_E)} &\leq \gg_0 \sqrt{ \tfrac{n-1}{2n}} |A|^2_{\Lambda^1(\mathfrak{g}_E)},
\end{split}
\end{align}
Since $\gg_0 \leq \sqrt{2}$, in general we have
\begin{align} \label{Brack1}
\brs{\brk{A,A}}_{\Lambda^2(\mathfrak{g}_E)} &\leq  \sqrt{ \tfrac{n-1}{n}} |A|^2_{\Lambda^1(\mathfrak{g}_E)}.
\end{align}
%
%
\end{lemma}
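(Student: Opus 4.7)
The plan is to decompose $A \in \Lambda^1(\mathfrak{g}_E)$ into its components along a local orthonormal frame on $TX$ and reduce the norm of $[A,A]$ to sums of bracket norms on $\mathfrak{g}_E$, where the pointwise estimate defining $\gamma_0$ applies directly. The only genuinely analytic input is then a Cauchy--Schwarz-type lower bound on $\sum_i |A_i|^4$, which produces the sharp $\sqrt{(n-1)/(2n)}$ factor.

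More precisely, I would first fix a local orthonormal basis $\{e_i\}$ of $TX$ and write $A = \sum_i e^i \otimes A_i$ with $A_i \in \mathfrak{g}_E$, so that unwinding the definition of the bracket gives $[A,A]_{jk} = [A_j, A_k]$ as an element of $\mathfrak{g}_E$, skew in $(j,k)$. Combining the $\frac{1}{2}$ from the inner product on $\Lambda^2$ with the $-\frac{1}{2}$ trace convention on $\mathfrak{g}_E$, one computes
\begin{align*}
|[A,A]|_{\Lambda^2(\mathfrak{g}_E)}^2 = \tfrac{1}{2} \sum_{j,k} |[A_j, A_k]|_{\mathfrak{g}_E}^2 = \sum_{j<k} |[A_j, A_k]|_{\mathfrak{g}_E}^2,
\end{align*}
while $|A|^2_{\Lambda^1(\mathfrak{g}_E)} = \sum_i |A_i|^2_{\mathfrak{g}_E}$.

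Next, I would apply the definition of $\gamma_0$ to each term: $|[A_j,A_k]|_{\mathfrak{g}_E} \leq \gamma_0 |A_j|_{\mathfrak{g}_E} |A_k|_{\mathfrak{g}_E}$, which yields
\begin{align*}
|[A,A]|_{\Lambda^2(\mathfrak{g}_E)}^2 \leq \gamma_0^2 \sum_{j<k} |A_j|^2 |A_k|^2 = \tfrac{\gamma_0^2}{2} \Bigl( |A|_{\Lambda^1(\mathfrak{g}_E)}^4 - \sum_i |A_i|^4 \Bigr).
\end{align*}
The main (and essentially only) step is then the elementary inequality $\sum_{i=1}^n |A_i|^4 \geq \tfrac{1}{n}\bigl(\sum_i |A_i|^2\bigr)^2$, which follows from Cauchy--Schwarz applied to the $n$-vector with entries $|A_i|^2$. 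Substituting gives
\begin{align*}
|[A,A]|_{\Lambda^2(\mathfrak{g}_E)}^2 \leq \tfrac{\gamma_0^2}{2}\Bigl(1 - \tfrac{1}{n}\Bigr) |A|_{\Lambda^1(\mathfrak{g}_E)}^4 = \tfrac{\gamma_0^2 (n-1)}{2n} |A|_{\Lambda^1(\mathfrak{g}_E)}^4,
\end{align*}
and taking square roots produces \eqref{eq:AlambdaA,A}. The second inequality \eqref{Brack1} is then immediate from $\gamma_0 \leq \sqrt{2}$ of \cite{BL}.

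There is no real obstacle here: the argument is a direct reduction to the defining property of $\gamma_0$ plus the sharp $\ell^2$-vs-$\ell^4$ comparison. The only thing to be careful about is tracking the numerical factors in the conventions $\langle R,S\rangle_{\Lambda^2(\mathfrak{g}_E)} = -\tfrac{1}{4} R_{ij\beta}^\alpha S_{ij\alpha}^\beta$ and $|A|^2_{\Lambda^1(\mathfrak{g}_E)} = -\tfrac{1}{2} A_{i\beta}^\alpha A_{i\alpha}^\beta$, so that the factor of $\tfrac{1}{2}$ inside the square root is correctly pinned down by the $\sum_{j<k}$ rather than $\sum_{j,k}$.
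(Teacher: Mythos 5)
Your proposal is correct and follows essentially the same route as the paper: decompose $A = \sum_i e^i \otimes A_i$, compute $|[A,A]|^2_{\Lambda^2(\mathfrak{g}_E)} = \sum_{i<j}|[A_i,A_j]|^2_{\mathfrak{g}_E}$, apply the defining inequality for $\gamma_0$ termwise, and close with $\sum_i |A_i|^4 \geq \tfrac{1}{n}\bigl(\sum_i |A_i|^2\bigr)^2$ (which you attribute to Cauchy--Schwarz and the paper to AM--GM, but these are the same elementary fact). The bookkeeping with the $-\tfrac12$ and $-\tfrac14$ conventions is handled correctly.
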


\begin{proof}
Fix a point $p \in X^n$ and let $\sqg{e^i}$ to be an orthonormal basis of $\Lambda^1$.  If $A \in \Lambda^1(\mathfrak{g}_E)$, then we can express $A = A_i e^i$ for $A_i \in \Gamma \prs{\mathfrak{g}_E}$.  Then
\begin{align*}
\brs{\brk{A,A}}^2_{\Lambda^2(\mathfrak{g}_E)} &= - \tfrac{1}{4} \brk{A,A}_{ij \ga}^{\gb} \brk{A,A}_{ij \beta}^{\ga} \\
&=\tfrac{1}{2} \sum_{i,j} \brs{\brk{A,A}_{ij}}^2_{\mathfrak{g}_E}\\
&= \tfrac{1}{2} \sum_{i,j} \brs{\brk{A_i,A_j}}^2_{\mathfrak{g}_E}\\
&= \sum_{i < j} \brs{\brk{A_i,A_j}}^2_{\mathfrak{g}_E}.
\end{align*}
By the definition of $\gamma_0$, this gives
\begin{align} \label{gotz} \begin{split}
\brs{\brk{A,A}}^2_{\Lambda^2(\mathfrak{g}_E)} &= \prs{ \sum_{ i < j } \brs{\brk{A_i, A_j}}^2_{\mathfrak{g}_E} } \\
&\leq \gg_0^2 \prs{\sum_{\leq i < j} \brs{A_i}^2_{\mathfrak{g}_E} \brs{A_j}^2_{\mathfrak{g}_E} }.
\end{split}
\end{align}

Now
\begin{align*}
|A|^4_{\Lambda^1(\mathfrak{g}_E)} = \sum_{i, j} \brs{A_i}^2_{\mathfrak{g}_E} \brs{A_j}^2_{\mathfrak{g}_E} &= 2 \sum_{i < j} \brs{A_i}^2_{\mathfrak{g}_E} \brs{A_j}_{\mathfrak{g}_E} ^2 + \sum_{i } \brs{A_i}^4_{\mathfrak{g}_E} ,
\end{align*}
while the arithmetic-geometric mean implies
\begin{align*}
\sum_{i } \brs{A_i}^4_{\mathfrak{g}_E}  \geq \tfrac{1}{n} \prs{\sum_{i} \brs{A_i}^2_{\mathfrak{g}_E}  }^2 = \tfrac{1}{n} |A|^4_{\Lambda^1(\mathfrak{g}_E)} .
\end{align*}
Therefore,
\begin{align*}
\sum_{1 \leq i < j \leq n} \brs{A_i}^2_{\mathfrak{g}_E}  \brs{A_j}^2_{\mathfrak{g}_E}  \leq \tfrac{(n-1)}{2n} |A|^4_{\Lambda^1(\mathfrak{g}_E)} .
\end{align*}
Substituting this into (\ref{gotz}) gives
\begin{align*}
\brs{\brk{A,A}}^2_{\Lambda^2(\mathfrak{g}_E)} &\leq \gg_0^2 \prs{ \tfrac{n-1}{2n}} |A|^4_{\Lambda^1(\mathfrak{g}_E)},
\end{align*}
and taking the square root yields \eqref{eq:AlambdaA,A}.
\end{proof}

\begin{prop} \label{Bsharp}  Suppose $E \to \prs{X^n,g}$ is a vector bundle and let
\[\mathcal{B} = \cZ + \brk{\Phi, \cdot}: \Lambda^1(\mathfrak{g}_E) \rightarrow \Lambda^1(\mathfrak{g}_E). \]
Then
\begin{align*}
\brs{ \mathcal{B} \prs{A,A}} \leq \sqrt{ \tfrac{n-1}{n}} \cdot \prs{\sqrt{ |\cZ|^2_{S_0^2(T^*M)} + 2 \gamma_0^2 |\Phi|^2_{\Lambda^2(\mathfrak{g}_E)} }} |A|^2_{\Lambda^1(\mathfrak{g}_E)}.
\end{align*}

\begin{proof}

Since $\mathcal{B}$ is symmetric by Lemma \ref{Bsymm}, there exists an orthonormal basis of $\Lambda^1(\mathfrak{g}_E)$ with respect to which the matrix of $\mathcal{B}$ is diagonalized.  Since the ranges of $\cZ$ and $\Phi$ are orthogonal by Lemma \ref{lem:PhiZorthog}, we can express the matrix of $\mathcal{B}$ as
\begin{align*}
\brk{\mathcal{B}} = \begin{pmatrix} \vec{z} & 0 \\
0 & \vec{\phi}
\end{pmatrix},
\end{align*}
where
\begin{align*}
\brk{\cZ} = \begin{pmatrix} \vec{z} & 0 \\
0 & 0
\end{pmatrix}, \ \ \ \
\brk{\Phi} = \begin{pmatrix} 0 & 0 \\
0 & \vec{\phi}
\end{pmatrix},
\end{align*}
are the matrices of $\cZ$ and $\Phi$ with respect to this basis, $\vec{z} = \prs{z_1, \cdots, z_n}$,  $\vec{\phi} = \prs{\phi_{1}, \cdots, \phi_{N} }$ are the eigenvalues of $\cZ$ and $\Phi$ respectively.  If $A \in \Lambda^1(\mathfrak{g}_E)$, then we can write $A = A_1 + A_2$, where
\begin{align*}
A_1 = \begin{pmatrix} \vec{a}\\
0
\end{pmatrix}, \ \ \
A_2 = \begin{pmatrix} 0 \\
\vec{b}
\end{pmatrix},
\end{align*}
with $\vec{a} = \prs{a_1, \cdots, a_n}$, $\vec{b} = \prs{b_{1}, \cdots, b_{N} }$.  Therefore, as a bilinear form
\begin{align*}
\mathcal{B} \prs{A,A} &= \cZ\prs{A_1,A_1} + \Phi(A_2,A_2) \\
&= \begin{pmatrix} \vec{z} & 0 \\
0 & \vec{\phi}
\end{pmatrix}\begin{pmatrix} \vec{a}\\
\vec{b}
\end{pmatrix} \cdot \begin{pmatrix} \vec{a} & \vec{b}
\end{pmatrix} \\
&= \sum_i z_i a_i^2 + \sum_{j} \phi_j b_j^2.
\end{align*}

Since $\cZ$ is trace-free via Lemma \ref{Bsymm}, 
\begin{align*}
\brs{ \cZ(A_1,A_1)} &= \brs{ \sum_i z_i a_i^2 }\\
&\leq \sqrt{\tfrac{n-1}{n}} |\vec{z}||\vec{a}|^2 \\
&= \sqrt{\tfrac{n-1}{n}}\brs{\cZ}_{S_0^2 \prs{T^*M}} |A_1|^2_{\Lambda^1(\mathfrak{g}_E)}.
\end{align*}
Also, by Lemma \ref{SharpBracket},
\begin{align*}
\brs{ \Phi(A_2,A_2)} &= \brs{ \sum_{j} \phi_j b_j^2  }\\
&= \brs{ \langle \brk{ \Phi, A_2}, A_2 \rangle }_{\Lambda^1 \prs{\mathfrak{g}_E}}\\
&= 2 \brs{ \langle \Phi, \brk{A_2,A_2} \rangle }_{\Lambda^2(\mathfrak{g}_E)} \\
&\leq 2 |\Phi |_{\Lambda^2(\mathfrak{g}_E)} \brs{ \brk{A_2,A_2}}_{\Lambda^2(\mathfrak{g}_E)} \\
&\leq2 \gamma_0 |\Phi |_{\Lambda^2(\mathfrak{g}_E)} \sqrt{ \tfrac{n-1}{2n}} \brs{A_2}_{\Lambda^1(\mathfrak{g}_E)}^2.
\end{align*}
Therefore,
\begin{align*}
\brs{ \mathcal{B}\prs{A,A}} \leq \sqrt{ \tfrac{n-1}{n}} \prs{ |\cZ|_{S_0^2 \prs{T^*M}}\brs{A_1}^2_{\Lambda^1 \prs{\mathfrak{g}_E}} + \sqrt{2}\gamma_0  |\Phi |_{\Lambda^2 \prs{\mathfrak{g}_E}} \brs{A_2}^2_{\Lambda^1 \prs{\mathfrak{g}_E}} },
\end{align*}
where we have dropped the subscripts designating the norms in order to simplify notation.  By the Cauchy-Schwartz inequality,
\begin{align*}
\brs{ \mathcal{B}\prs{A,A}} &\leq \sqrt{ \tfrac{n-1}{n}} \cdot \sqrt{ |\cZ|^2_{S_0^2(T^*M)} + 2 \gamma_0^2 |\Phi|^2_{\Lambda^2(\mathfrak{g}_E)} } \cdot \sqrt{ |A_1|_{\Lambda^1(\mathfrak{g}_E)}^4 + |A_2|^4_{\Lambda^1(\mathfrak{g}_E)} } \\
&\leq \sqrt{ \tfrac{n-1}{n}} \cdot \prs{\sqrt{ |\cZ|^2_{S_0^2(T^*M)} + 2 \gamma_0^2 |\Phi|^2_{\Lambda^2(\mathfrak{g}_E)} }} |A|^2_{\Lambda^1(\mathfrak{g}_E)}.
\end{align*}
The result follows.
\end{proof}
\end{prop}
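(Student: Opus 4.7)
The plan is to leverage the structural results from Lemmas \ref{Bsymm} and \ref{lem:PhiZorthog}: both $\cZ$ and $\brk{\Phi,\cdot}$ are symmetric endomorphisms of $\Lambda^1(\mathfrak{g}_E)$ with mutually orthogonal ranges, so $\mathcal{B}$ admits a block-diagonal representation in an orthonormal basis adapted to this splitting. One block carries the action of $\cZ$, with eigenvalues $\vec z = (z_1,\dots,z_n)$ summing to zero thanks to the tracelessness assertion of Lemma \ref{Bsymm}; the orthogonal block carries the action of $\brk{\Phi,\cdot}$ with eigenvalues $\vec\phi$. Writing $A = A_1 + A_2$ along these blocks, the quadratic form splits as $\mathcal{B}(A,A) = \cZ(A_1,A_1) + \brk{\Phi,A_2}(A_2)$, and $\brs{A}^2 = \brs{A_1}^2 + \brs{A_2}^2$.

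Next I would bound each block separately. For $\cZ$, the elementary inequality $\brs{\sum_i z_i a_i^2} \leq \sqrt{(n-1)/n}\,|\vec z|\,|\vec a|^2$ for traceless diagonal actions (a consequence of $\sum_i z_i = 0$ together with Cauchy--Schwarz) gives
\[
\brs{\cZ(A_1,A_1)} \leq \sqrt{\tfrac{n-1}{n}}\,\brs{\cZ}_{S_0^2(T^*M)}\brs{A_1}^2_{\Lambda^1(\mathfrak{g}_E)}.
\]
For $\Phi$, I would rewrite $\ip{\brk{\Phi,A_2},A_2}_{\Lambda^1(\mathfrak{g}_E)}$ as $2\ip{\Phi,\brk{A_2,A_2}}_{\Lambda^2(\mathfrak{g}_E)}$ via cyclicity of the Lie bracket pairing, apply Cauchy--Schwarz in $\Lambda^2(\mathfrak{g}_E)$, and then invoke Lemma \ref{SharpBracket} to control $\brs{\brk{A_2,A_2}}$ by $\gamma_0 \sqrt{(n-1)/(2n)}\,\brs{A_2}^2$. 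This yields $\brs{\Phi(A_2,A_2)} \leq \sqrt{(n-1)/n}\cdot \sqrt{2}\,\gamma_0\,\brs{\Phi}_{\Lambda^2(\mathfrak{g}_E)}\brs{A_2}^2$.

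Summing these two block estimates produces a bound of the form $c_1\brs{A_1}^2 + c_2\brs{A_2}^2$ with $c_1 = \sqrt{(n-1)/n}\,\brs{\cZ}$ and $c_2 = \sqrt{(n-1)/n}\cdot\sqrt{2}\,\gamma_0\,\brs{\Phi}$. A single application of Cauchy--Schwarz to the pairs $(c_1,c_2)$ and $(\brs{A_1}^2,\brs{A_2}^2)$, followed by the elementary inequality $\sqrt{\brs{A_1}^4+\brs{A_2}^4} \leq \brs{A_1}^2+\brs{A_2}^2 = \brs{A}^2$, assembles the block bounds into the claimed estimate with combined coefficient $\sqrt{\brs{\cZ}^2 + 2\gamma_0^2\brs{\Phi}^2}$.

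The only mildly delicate step I anticipate is confirming the block decomposition cleanly: that orthogonality of the ranges of $\cZ$ and $\brk{\Phi,\cdot}$ (from Lemma \ref{lem:PhiZorthog}), combined with their symmetry, permits a simultaneously diagonalizing orthonormal basis in which $A$ splits compatibly as $A_1 + A_2$ with $\brs{A}^2 = \brs{A_1}^2 + \brs{A_2}^2$. Once this is in place, the remaining ingredients are the traceless-vector inequality, the bracket estimate of Lemma \ref{SharpBracket}, and a single Cauchy--Schwarz.
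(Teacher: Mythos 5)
Your proposal matches the paper's argument essentially step for step: the same block decomposition of $\mathcal{B}$ from Lemmas \ref{Bsymm} and \ref{lem:PhiZorthog}, the same traceless-vector inequality for the $\cZ$ block, the same appeal to Lemma \ref{SharpBracket} for the $\Phi$ block, and the same final Cauchy--Schwarz combination using $\sqrt{|A_1|^4+|A_2|^4}\leq|A|^2$. The only cosmetic difference is that you explicitly flag that the orthogonal-range condition plus symmetry yields a simultaneously diagonalizing basis, which the paper leaves implicit; this is a fair and correct observation but not a different route.
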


\subsection{A canonical conformal representative} \label{Sec1}

Since the index and nullity of a Yang--Mills connection in four dimensions are conformally invariant, we may estimate them with respect to any metric conformal to the base metric $g$.   In this subsection, we specify a choice of conformal metric based on our work in \cite{GKS}.  To this end, suppose $\nabla$ is a Yang--Mills connection on a vector bundle $E$ over
$(X^4,g)$ with structure group $G \subset \SO(E)$, and denote the curvature by $F = F_{\N}$.   For $t \geq 0$, define
\begin{align*}
\Phi_{g}^t = R_{g}  - t \big[ 2 \sqrt{6} |W|_{g} + 3  \gamma_1  |F|_{g}\big],
\end{align*}
where $R_g$ is the scalar curvature of $g$, $W_g$ is the Weyl tensor, and $\gamma_1(E)$ is the constant given by
\begin{align} \label{g1Def}
\gamma_1(E) :=&\ \sup_{\omega \in \Lambda^2_{+}(\mathfrak{g}_E) \setminus \{0\}}
\dfrac{ \langle \omega, [\omega, \omega]\rangle }{|\omega|^3}.
\end{align}

\smallskip

\begin{rmk}  \label{conjob} The definition of the inner product on $\Lambda^2_{+}(\mathfrak{g}_E)$ given in \cite{GKS} differs from the definition of this paper.  In particular, the estimate for $\gamma_1(E)$ in Section 2 of \cite{GKS} needs to be adjusted.  With respect to our current conventions, we have the estimate
\begin{align} \label{g1} \begin{split}
\gamma_1(E) &\leq \tfrac{2\sqrt{6}}{3} \gamma_0(E) \\
&\leq \tfrac{4 \sqrt{3}}{3}.
\end{split}
\end{align}
\end{rmk}

\smallskip

We also define the associated operator
\begin{align*}
L^t_g = - 6 \Delta_g + \Phi_g^t.
\end{align*}

In \cite{GKS}, based on the ideas of \cite{Gursky}, we defined the related curvature and operator
\begin{align}  \label{phinow} \begin{split}
\Phi_{g} &= R_{g}  - 2 \sqrt{6} |\Wp|_{g} - 3  \gamma_1  |\Fp|_{g}, \\
L_g &= - 6 \Delta_g + \Phi_g.
\end{split}
\end{align}
It is easy to see that the expression $\gamma_1(E) |F|$ is independent of the choice of norms.  Therefore, despite the difference of conventions pointed out in Remark \ref{conjob}, the definition of $\Phi_g$ in (\ref{phinow}) agrees with the corresponding formula (3.5) in \cite{GKS}.

Observe that
\begin{align*}
\Phi_g^0 &= R_g, \\
\Phi_g^1 &\leq \Phi_g.
\end{align*}
In addition, $\Phi^t$ satisfies the same kind of conformal transformation
formula as $\Phi$: given $\hg = u^2 g$,
\begin{align*}
\Phi_{\hg}^t &= u^{-3} L_g^t u.
\end{align*}
If $\lambda_1(L^t)$ denotes the first eigenvalue of $L^t$,
\begin{align} \label{LL}
\lambda_1(L_g^t) = \inf_{\phi \in C^{\infty}(X) \backslash \sqg{0}} \dfrac{ \int_X \phi L_g^t
\phi\ \dV_g }{\int_X \phi^2\ \dV_g},
\end{align}
then the sign of $\lambda_1(L^t)$ is a conformal invariant (see \cite{Gursky}, Proposition 3.2).  In particular, by
using an eigenfunction associated with $\lambda_1(L^t)$ as a conformal factor, it
follows that $[g]$ admits a metric $\hg$ with $\Phi_{\hg}^t > 0$ ({\em resp.}, $=
0, < 0$) if and only if $\lambda_1(L_g^t) > 0$ ({\em resp.} $= 0, < 0$).

\begin{prop}  \label{CGF}  Assume $(X^4,[g])$ has  $\cY(X^4,[g]) > 0$.  Given $\nabla$ a Yang-Mills connection which is not an instanton, there exists $t_0 \in (0,1]$ such that $\lambda_1(L_g^{t_0}) = 0$.  In particular, we can choose a conformal
metric $\hg \in [g]$ with respect to which $\Phi^{t_0}_{\hg} \equiv 0$, hence
\begin{align} \label{Rprop}
R_{\hg} = 2\sqrt{6} t_0 |W_{\hg}| + 3  \gamma_1 t_0 |F|_{\hg}.
\end{align}
Moreover,
\begin{align} \label{tb}
\dfrac{ \cY(X^4,[g]) }{  2 \sqrt{6} \| W \|_{L^2} + 3 \gamma_1 \| F \|_{L^2}}\leq t_0 \leq 1.
\end{align}
\end{prop}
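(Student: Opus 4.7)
The plan is to regard $\lambda_1(L_g^t)$ as a continuous function of $t \in [0,1]$, locate $t_0$ via an intermediate value argument on the endpoints, and convert the resulting zero eigenvalue into the desired conformal representative using the conformal covariance of $L_g^t$. The lower bound on $t_0$ will then drop out of the corresponding variational identity together with the definition of the Yamabe invariant.

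For the endpoint signs, at $t = 0$ we have $L_g^0 = -6\Delta_g + R_g$, which up to a factor of $6$ is the conformal Laplacian, and the assumption $\cY(X^4,[g]) > 0$ is precisely the statement that $\lambda_1(L_g^0) > 0$. At $t = 1$, the pointwise inequalities $|\Wp|_g \leq |W|_g$ and $|\Fp|_g \leq |F|_g$ give $\Phi_g^1 \leq \Phi_g$, hence $\lambda_1(L_g^1) \leq \lambda_1(L_g)$ for the operator $L_g$ of (\ref{phinow}). The non-instanton hypothesis forces $\Fp \not\equiv 0$, and the inequality $\lambda_1(L_g) \leq 0$ under this condition is established in \cite{GKS} (following \cite{Gursky}) by inserting $|\Fp|^{1/2}$ as a test function in (\ref{LL}) and invoking a refined Kato inequality together with the Bourguignon--Lawson Weitzenb\"ock identity for $\Fp$ on a Yang--Mills connection. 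Since $\Phi_g^t$ is affine in $t$, standard perturbation theory for self-adjoint elliptic operators shows that $t \mapsto \lambda_1(L_g^t)$ is continuous (in fact concave), so the intermediate value theorem yields some $t_0 \in (0,1]$ with $\lambda_1(L_g^{t_0}) = 0$.

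Let $u \in C^\infty(X^4)$ denote a $0$-eigenfunction of $L_g^{t_0}$. By the Perron--Frobenius property of the lowest eigenfunction of a second-order elliptic operator, $u$ may be taken strictly positive, so $\hg := u^2 g$ is a smooth conformal metric. Using $R_{\hg} = u^{-3}(-6\Delta_g u + R_g u)$ together with $|W|_{\hg} = u^{-2}|W|_g$ and $|F|_{\hg} = u^{-2}|F|_g$ (since $|W|$ and $|F|$ are curvature densities of conformal weight $-2$), a direct calculation establishes the covariance $\Phi_{\hg}^t = u^{-3} L_g^t u$. At $t = t_0$ this gives $\Phi_{\hg}^{t_0} \equiv 0$, which is (\ref{Rprop}). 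Testing the equation $L_g^{t_0} u = 0$ against $u$ and integrating by parts yields
\begin{align*}
\int_{X} \bigl[6|\nabla u|^2 + R_g u^2\bigr]\,\dV_g = t_0 \int_{X} \bigl(2\sqrt{6}|W|_g + 3\gamma_1|F|_g\bigr) u^2\,\dV_g.
\end{align*}
The left-hand side is at least $\cY(X^4,[g])\bigl(\int_X u^4\,\dV_g\bigr)^{1/2}$ by the definition of the Yamabe invariant, and the right-hand side is at most $t_0\bigl(2\sqrt{6}\|W\|_{L^2} + 3\gamma_1\|F\|_{L^2}\bigr)\bigl(\int_X u^4\,\dV_g\bigr)^{1/2}$ by Cauchy--Schwarz together with Minkowski; rearranging yields (\ref{tb}).

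The one genuinely nontrivial input is the endpoint bound $\lambda_1(L_g) \leq 0$ for a non-instanton Yang--Mills connection: this is the step where the Yang--Mills equation really enters, and it rests on a sharp Kato/Weitzenb\"ock computation carried out in \cite{GKS}, so I expect that to be the main technical obstacle. The remaining ingredients---perturbation continuity of $\lambda_1$, Perron--Frobenius positivity, the conformal covariance identity, and the Yamabe/Cauchy--Schwarz comparison---are essentially standard once that input is in hand.
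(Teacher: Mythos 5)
Your proposal is correct and follows essentially the same route as the paper's own proof: identify the endpoint signs $\lambda_1(L_g^0)>0$ (from $\cY>0$) and $\lambda_1(L_g^1)\leq\lambda_1(L_g)\leq 0$ (from the Bochner/Kato argument of \cite{GKS}, valid since $F^+\not\equiv 0$), invoke continuity of $t\mapsto\lambda_1(L_g^t)$ and the intermediate value theorem, take the positive lowest eigenfunction as a conformal factor, and obtain \eqref{tb} via the Yamabe/Cauchy--Schwarz comparison. The only cosmetic difference is that you derive \eqref{tb} by pairing $L_g^{t_0}u=0$ with $u$ in the $g$-metric, whereas the paper integrates \eqref{Rprop} in the $\hg$-metric; unwinding $\dV_{\hg}=u^4\dV_g$ and $R_{\hg}=u^{-3}L_g u$ shows these are the identical computation.
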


\begin{proof} Using the Bochner formula for Yang-Mills connections, in \cite{GKS} we showed that either $F^{+} \equiv 0$, or else $\lambda_1(L_g) = \lambda_1(L_g^1) \leq 0$. Since we are ruling out the former by assumption, the latter condition must hold.  In fact, we can assume $\lambda_1(L_g^1) < 0$, since otherwise we could take $t_0 = 1$.

Clearly, $\lambda_1(L_g^t)$ depends continuously on the parameter $t$.  Since $\Phi_g^0 = R_g$ and the Yamabe invariant of $(X^4,[g])$ is positive, we know that $\lambda_1(L_g^0) > 0$. By the intermediate value theorem, it follows there is $t_0 \in (0,1]$ with $\lambda_1(L_g^{t_0}) = 0$.  Also, integrating (\ref{Rprop}) and using the Cauchy-Schwarz inequality it is easy to see that $t_0$ satisfies (\ref{tb}).  \end{proof}

\subsection{The Proof of Theorem \ref{t:mainthm}}

In this subection use Theorem \ref{NestProp} to give the proof of Theorem \ref{t:mainthm}.  As remarked above, since the index and nullity are conformal invariants we are free to make a conformal modification of the base metric and we choose the conformal gauge guaranteed by Proposition \ref{CGF}.  To begin we obtain an algebraic estimate for the Jacobi operator.  Specifically, let $Z$ now denote the trace-free Ricci tensor, i.e.
\begin{align*}
\cZ := \Ric - \tfrac{1}{4} R g.
\end{align*}
We express $\mathcal{J}^{\N}$ as
\begin{gather} \label{J3}
\begin{split}
\mathcal{J}^{\N} =&\ -\Delta  + \tfrac{1}{4} R  + \cZ + 2 [ F_{\N}, \cdot ]\\
=&\ - \Delta + \tfrac{1}{6}R + \sqg{ \tfrac{1}{12}R + \tfrac{\sqrt{3}  }{12} \gamma_1  t_0  [ F, \cdot ] }_{\mathcal{A}} + \sqg{ \cZ + \prs{ 2 - \tfrac{\sqrt{3}}{12} \gamma_1 t_0} [F,\cdot ] }_{\mathcal{B}},
\end{split}
\end{gather}
and proceed to estimate the zeroth-order operators $\mathcal{A}$ and $\mathcal{B}$ labeled above.

\begin{lemma} \label{Alow} As a bilinear form, $\mathcal A \geq 0$.
\end{lemma}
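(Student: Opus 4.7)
The plan is to show pointwise positivity of the bilinear form $\mathcal{A}$ on $\Lambda^1(\mathfrak{g}_E)$. The strategy is to use the conformal gauge identity from Proposition \ref{CGF} to rewrite the scalar curvature term as a combination of $|W|$ and $|F|$, and then control the bracket term $[F,\cdot]$ using the sharp linear algebraic estimate from Proposition \ref{Bsharp}.

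First I would fix $A \in \Lambda^1(\mathfrak{g}_E)$ and write
\[
\mathcal{A}(A,A) = \tfrac{1}{12}R|A|^2 + \tfrac{\sqrt{3}}{12}\gamma_1 t_0 \langle [F, A], A\rangle_{\Lambda^1(\mathfrak{g}_E)}.
\]
The only potentially negative contribution is the bracket term. To bound it, I would invoke Proposition \ref{Bsharp} with $\cZ = 0$ and $\Phi = F$ in dimension $n = 4$, which yields the pointwise inequality
\[
\big| \langle [F, A], A\rangle_{\Lambda^1(\mathfrak{g}_E)}\big| \leq \sqrt{\tfrac{3}{2}}\,\gamma_0\,|F|\,|A|^2.
\]
Applying the universal bound $\gamma_0 \leq \sqrt{2}$ from \eqref{g02} simplifies this to $|\langle [F,A], A\rangle| \leq \sqrt{3}\,|F|\,|A|^2$.

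Next I would substitute the scalar curvature identity $R = 2\sqrt{6} t_0 |W| + 3\gamma_1 t_0 |F|$ from \eqref{Rprop}. The coefficients have been calibrated so that the $|F|$-contributions cancel exactly: the scalar curvature contributes $\tfrac{1}{12}\cdot 3\gamma_1 t_0 |F| = \tfrac{1}{4}\gamma_1 t_0|F|$, while the bracket estimate subtracts at most $\tfrac{\sqrt{3}}{12}\gamma_1 t_0 \cdot \sqrt{3}|F| = \tfrac{1}{4}\gamma_1 t_0 |F|$. After this exact cancellation, only the nonnegative Weyl contribution remains:
\[
\mathcal{A}(A,A) \geq \tfrac{\sqrt{6}}{6} t_0 |W|\,|A|^2 \geq 0.
\]

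I do not anticipate any real obstacle: once Proposition \ref{Bsharp} and the conformal gauge identity are available, the proof reduces to an algebraic verification. The conceptual point, rather than a technical one, is that the decomposition \eqref{J3} has been designed so that the coefficient $\tfrac{\sqrt{3}}{12}\gamma_1 t_0$ attached to the bracket in $\mathcal{A}$ matches precisely the $3\gamma_1 t_0 |F|$ term appearing in the conformal gauge formula for $R$; this is what produces the sharp cancellation and the clean nonnegativity.
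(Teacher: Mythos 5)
Your proof is correct and follows exactly the paper's argument: invoke Proposition \ref{Bsharp} with $\cZ=0$, $\Phi=F$, apply $\gamma_0 \leq \sqrt{2}$ to get $|\langle[F,A],A\rangle| \leq \sqrt{3}|F||A|^2$, and substitute the conformal gauge identity $R = 2\sqrt{6}t_0|W| + 3\gamma_1 t_0|F|$ to see the $|F|$ contributions cancel, leaving $\mathcal{A}(A,A) \geq \tfrac{\sqrt{6}}{6}t_0|W||A|^2 \geq 0$. Your added remark on how the coefficients in \eqref{J3} were calibrated to produce this cancellation is an accurate reading of the construction.
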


\begin{proof}  If we take $\cZ=0$ and $\Phi = F_{\N}$ in Proposition \ref{Bsharp}, then
\begin{align*}
|F(A,A)| &= | \langle [ F, A], A \rangle| \\
&\leq \tfrac{\sqrt{3}}{2} \sqrt{ 2 \gamma_0^2  |F|^2 }|A|^2. \\
&= \tfrac{\sqrt{6}}{2} \gamma_0 |F| |A|^2.
\end{align*}
Since $\gamma_0 \leq \sqrt{2}$, it follows that
\begin{align*}
| \langle [ F, A], A \rangle| \leq \sqrt{3} |F| |A|^2.
\end{align*}
Therefore,
\begin{align*}
\mathcal{A}(A,A) &=  \tfrac{1}{12} R \brs{A}^2  + \tfrac{\sqrt{3}}{12} \gamma_1 t_0\ip{ \brk{ F, A }, A }  \\
&\geq \tfrac{1}{12} R |A|^2 - \tfrac{\sqrt{3}  }{12} \gamma_1 t_0 \prs{ \sqrt{3} |F|} |A|^2  \\
&= \tfrac{1}{12} \prs{ R - 3 \gamma_1  t_0 \brs{F} } \brs{A}^2.
\end{align*}
Using the formula for the scalar curvature in (\ref{Rprop}), we conclude
\begin{align*}
\mathcal{A}(A,A) &\geq \tfrac{1}{12} \prs{ R - 3 \gamma_1  t_0 \brs{F}  \brs{A}^2} \\
&= \tfrac{\sqrt{6}}{6} t_0 \brs{W} \brs{A}^2 \\
&\geq 0.
\end{align*}
\end{proof}

\begin{lemma} \label{Blow} Let
\begin{align} \label{alphadef}
\alpha = 2 - \tfrac{\sqrt{3}}{12} \gamma_1 t_0 > 0.
\end{align}
Then
\begin{align} \label{Bgeq}
\mathcal{B} \prs{A,A} \geq - \brk{\tfrac{3}{4} \brs{\cZ}^2 + 3 \alpha^2 \brs{F}^2}^{1/2} \brs{A}^2.
\end{align}
\end{lemma}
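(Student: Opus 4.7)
The lemma is essentially a direct corollary of Proposition \ref{Bsharp} specialized to $n=4$ with $\Phi = \alpha F_{\N}$, combined with the universal bound $\gamma_0 \leq \sqrt{2}$ from \eqref{g02}. The main content is just tracking constants, plus a brief verification that $\alpha > 0$.

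The plan is as follows. First, I would verify that $\alpha = 2 - \tfrac{\sqrt{3}}{12}\gamma_1 t_0 > 0$. Since Proposition \ref{CGF} gives $t_0 \leq 1$, and Remark \ref{conjob} gives $\gamma_1 \leq \tfrac{4\sqrt 3}{3}$, we obtain
\[
\tfrac{\sqrt 3}{12} \gamma_1 t_0 \leq \tfrac{\sqrt 3}{12} \cdot \tfrac{4 \sqrt 3}{3} = \tfrac{1}{3},
\]
so $\alpha \geq 2 - \tfrac{1}{3} = \tfrac{5}{3} > 0$, justifying \eqref{alphadef}.

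Next, I would apply Proposition \ref{Bsharp} with $n = 4$ and $\Phi = \alpha F_{\N}$, so that $\mathcal{B} = \cZ + [\alpha F_{\N}, \cdot\,]$ is exactly the operator in question. The proposition yields
\[
|\mathcal{B}(A,A)| \leq \sqrt{\tfrac{n-1}{n}} \cdot \sqrt{|\cZ|^2 + 2 \gamma_0^2 \alpha^2 |F|^2}\; |A|^2.
\]
With $n=4$ this gives the prefactor $\sqrt{3/4} = \tfrac{\sqrt 3}{2}$, and using $\gamma_0 \leq \sqrt{2}$ from \eqref{g02} we have $2\gamma_0^2 \leq 4$, so
\[
|\mathcal{B}(A,A)| \leq \tfrac{\sqrt 3}{2}\sqrt{|\cZ|^2 + 4 \alpha^2 |F|^2}\;|A|^2 = \sqrt{\tfrac{3}{4}|\cZ|^2 + 3\alpha^2|F|^2}\;|A|^2.
\]
The lower bound \eqref{Bgeq} is immediate from this two-sided estimate.

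There is no real obstacle; the difficult algebraic work has already been carried out in Lemmas \ref{Bsymm}, \ref{lem:PhiZorthog}, and \ref{SharpBracket} and packaged into Proposition \ref{Bsharp}. The only thing that needs care is matching the constants — specifically, making sure the factor $\alpha$ is carried through $\Phi$ (and hence squared in the $2\gamma_0^2 |\Phi|^2$ term) and using the dimension-dependent factor $\sqrt{(n-1)/n}$ to absorb the $1/4$ into the coefficient of $|\cZ|^2$ and the $1/3$ into the coefficient of $|F|^2$ appearing under the square root.
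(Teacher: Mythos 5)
Your proof is correct and takes essentially the same approach as the paper: apply Proposition \ref{Bsharp} with $\Phi = \alpha F$, specialize to $n=4$ to get the prefactor $\sqrt{3}/2$, and use $\gamma_0 \leq \sqrt{2}$ to replace $2\gamma_0^2$ by $4$, which after distributing $3/4$ yields exactly the stated coefficients. Your extra verification that $\alpha > 0$ via the bounds $t_0 \leq 1$ and $\gamma_1 \leq 4\sqrt{3}/3$ is a small useful addition not spelled out in the paper's proof.
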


\begin{proof} Note that $\mathcal{B} = \cZ + \alpha [F, \cdot]$.  If we take $\Phi = \alpha F$ in Proposition \ref{Bsharp} and use the fact that $\gamma_0 \leq \sqrt{2}$, then
\begin{align*}
\mathcal{B}(A,A) &\geq -\tfrac{\sqrt{3}}{2} \brk{\brs{\cZ}^2 + 2 \gamma_0^2 \alpha^2 \brs{F}^2}^{1/2} \brs{A}^2 \\
&\geq \brk{\tfrac{3}{4} \brs{\cZ}^2 + 3 \alpha^2 \brs{F}^2}^{1/2}\brs{A}^2,
\end{align*}
as claimed.
\end{proof}

In view of (\ref{J3}) and Lemmas \ref{Alow} and \ref{Blow}, we have
\begin{align*} \begin{split}
\langle \mathcal{J}^{\nabla} A, A \rangle_{L^2} &\geq \langle \big(  - \Delta + \tfrac{1}{6}R  - \brk{\tfrac{3}{4} \brs{\cZ}^2 + 3 \alpha^2 \brs{F}^2}^{1/2} \big) A, A \rangle_{L^2} \\
&= \langle \prs{ - \Delta + \tfrac{1}{6}R - V }A, A \rangle_{L^2},
\end{split}
\end{align*}
where
\begin{align} \label{Vdef}
V = \brk{\tfrac{3}{4} \brs{\cZ}^2 + 3 \alpha^2 \brs{F}^2}^{1/2}.
\end{align}
We therefore define
\begin{align} \label{Kdef}
{\mathcal S} =  - \Delta + \tfrac{1}{6}R - V.
\end{align}

To estimate the index and nullity of $\mathcal J^{\N}$ it suffices to obtain the estimate for $\mathcal S$.  Applying Theorem \ref{NestProp} to the operator $\mathcal S$ on the bundle $\Lambda^{1} (\mathfrak g_E)$, which has rank $4d$, where $d = \dim(\mathfrak{g}_E)$, we obtain
\begin{align} \label{Nesty}  \begin{split}
N_0(\mathcal{S}) &\leq \dfrac{ 144 e^2 d   } {\cY(X^4, [g])^2 }   \int_{X} V^2 \, \dV \\
&\leq \dfrac{ 144 e^2 d   } {\cY(X^4, [g])^2 } \Bigg\{ \tfrac{3}{4}  \int_{X} |\cZ|^2 \,\dV + 3 \alpha^2  \int_{X} |F|^2 \, \dV   \Bigg\}.
\end{split}
\end{align}

By the Chern--Gauss--Bonnet formula
\begin{align}  \label{CGB}
\tfrac{3}{4} \int_X \brs{\cZ}^2 \,\dV = - 12 \pi^2 \Eul\prs{X^4} + \tfrac{3}{2} \int_X | W |^2 \,\dV + \tfrac{1}{16} \int_X R^2 \,\dV.
\end{align}
Using the conformal gauge fixing of Proposition \ref{CGF}, we can estimate the scalar curvature term above as
\begin{align*} \begin{split}
\tfrac{1}{16} \int_X R^2 \, \dV &= \tfrac{t_0^2}{16} \int_X \prs{ 2 \sqrt{6}|W| + 3 \gamma_1  |F| }^2 \, \dV \\
&= \tfrac{3}{2} t_0^2 \int_X |W|^2 \, \dV + \tfrac{3\sqrt{6}}{4} \gamma_1 t_0^2 \int_X |W||F| \, \dV + \tfrac{9}{16} \gamma_1^2 t_0^2 \int_X |F|^2 \, \dV.
\end{split}
\end{align*}
Substituting this into (\ref{CGB}) gives
\begin{align*} \begin{split}
\tfrac{3}{4} \int_X \brs{\cZ}^2 \,\dV &= - 12 \pi^2  \Eul \prs{X^4}  + \tfrac{3}{2}\prs{1 +  t_0^2} \int_X | W |^2 \,\dV \\
& \ \ \ \ + \tfrac{3\sqrt{6}}{4} \gamma_1 t_0^2 \int_X |W||F| \, \dV + \tfrac{9}{16} \gamma_1^2 t_0^2 \int_X |F|^2 \, \dV.
\end{split}
\end{align*}
We now substitute this into (\ref{Nesty}) to get
\begin{align} \label{Nesty2}  \begin{split}
N_0(\mathcal{\mathcal S}) &\leq \dfrac{ 144 e^2 d   } {\cY(X^4, [g])^2 } \Big\{ - 12 \pi^2 \Eul \prs{X^4}  +  \tfrac{3}{2}\prs{ 1 +  t_0^2} \int_X | W^{+} |^2 \,\dV \\
& \ \ \ \ + \tfrac{3\sqrt{6}}{4} \gamma_1 t_0^2  \int_X |W||F| \, \dV + \prs{ 3 \alpha^2 + \tfrac{9}{16} \gamma_1^2 t_0^2} \int_{X} |F|^2 \, \dV \Big\}.
\end{split}
\end{align}
We estimate the coefficients of each of terms above as follows:  For the first coefficient, since $t_0 \leq 1$ we have
\begin{align*}
\tfrac{3}{2}\prs{ 1 + t_0^2} \leq 3.
\end{align*}
Since $0 \leq t_0 \leq 1$ and by (\ref{g1}) $\gamma_1 \leq \tfrac{4\sqrt{3}}{3}$, we can bound the second coefficient by
\begin{align} \label{2ndc} \begin{split}
\tfrac{3\sqrt{6}}{4} \gamma_1 t_0^2 &\leq \tfrac{3\sqrt{6}}{4} \gamma_1 \\
&\leq 3 \sqrt{2}.
\end{split}
\end{align}
For the third coefficient we use the formula for $\alpha$ in (\ref{alphadef}) to write
\begin{align} \label{3rdc} \begin{split}
\prs{ 3 \alpha^2 + \tfrac{9}{16} \gamma_1^2 t_0^2} = \tfrac{5}{8} (\gamma_1 t_0)^2 - \sqrt{3} (\gamma_1 t_0) + 12.
\end{split}
\end{align}
Now $\gamma_1 t_0 \leq \frac{4\sqrt{3}}{3}$, and the quadratic polynomial $q(x) = \tfrac{5}{8} x^2 - \sqrt{3} x + 12$ attains its maximum at $x = 0$ on the interval $\brk{0, \frac{4 \sqrt{3}}{3}}$.  Consequently,
\begin{align*}
\prs{ 3\alpha^2 + \tfrac{9}{16} \gamma_1^2 t_0^2} \leq 12.
\end{align*}
With these estimates on the coefficients, we can rewrite (\ref{Nesty2}) as
\begin{align*}
N_0(\mathcal{\mathcal S}) \leq \dfrac{ 144 e^2 d   } {\cY(X^4, [g])^2 } \Big\{ - 12 \pi^2 \Eul\prs{X^4} + 3 \int_{X} | W |^2 \,\dV + 3 \sqrt{2} \int_X |W||F| \, \dV + 12  \int_{X} |F|^2 \, \dV \Big\},
\end{align*}
finishing the proof. \qed

\subsection{Linear growth rate in four dimensions}

Theorem \ref{t:mainthm} exhibits that the index can grow at worst linearly in the Yang-Mills energy of the connection.  In this section we show that this growth rate is sharp through an explicit family of examples.  Various authors \cite{SSU, BM, SS, Bor} have shown the existence of families of noninstanton Yang--Mills connection for a given $\SU(2)$ bundle over $\mathbb{S}^4$ provided that the charge $\gk$ satisfies $\kappa (E) \neq \pm 1$.  We will use the work of Sadun--Segert \cite{SS}, who constructed non-instanton Yang-Mills connections on the so-called `quadrupole bundles.'  The proposition below analyzes this construction in conjunction with an index estimate of Taubes (\cite{Taubes} Theorem 1.1) to exhibit the required index growth.
\begin{prop} \label{p:lineargrowth} Given $l = 4k - 1 > 1$, let $\N^l$ denote the Sadun--Segert connection on the quadrupole bundle $P_{(l,3)} \to \mathbb{S}^4$.  There exists a constant $\gd > 0$ so that
\begin{align*}
\imath\prs{\N^l} \geq \gd \brs{\brs{F_{\N^l}}}_{L^2}^2
\end{align*}
\begin{proof} We assume familiarity with the results and notation of \cite{SS}.  The quadrupole bundles are defined by different lifts of the unique irreducible representation of $\SU(2)$ on $\mathbb R^5$, and are classified by a pair of odd positive integers $(n_+, n_-)$, with the bundle denotes $P_{(n_+, n_-)}$.  The construction of \cite{SS} further restricts to the case $n_{\pm} \neq 1$.  We will choose $n_+ = l = 4k - 1 > 1$, $n_- = 3$, and let $\N^l$ denote the Sadun--Segert connection on $P_{(l,3)}$.  As computed in \cite{SS1, ASSS} one has
\begin{align} \label{f:lineargrowth10}
\gk(P_{(n_+, n_-)}) = \tfrac{1}{8} (n_+^2 - n_-^2) = \tfrac{1}{8} \left(l^2 - 9 \right).
\end{align}
Furthermore, as the connection $\N^l$ is not self-dual, \cite{Taubes} Theorem 1.1 yields
\begin{align} \label{f:lineargrowth20}
\imath(\N^l) \geq 2 \left( \brs{\gk(P_{(l, 3)})} + 1 \right)
\end{align}
We claim that there exists a constant $C > 0$ so that $\N^l$ satisfies
\begin{align} \label{f:lineargrowth15}
\brs{\brs{F_{\N^l}}}_{L^2}^2 \leq C l^2.
\end{align}
Assuming this for the moment, putting together (\ref{f:lineargrowth10}) - (\ref{f:lineargrowth15}) yields
\begin{align*}
\imath(\N^l) \geq&\ 2 \left( \brs{\gk(P_{(l, 3)})} + 1 \right)\\
=&\ 2 \left( \tfrac{1}{8} \left(l^2 - 9 \right) + 1 \right)\\
\geq&\ \tfrac{1}{4} l^2\\
\geq&\ \tfrac{1}{4C} \brs{\brs{F_{\N^l}}}_{L^2}^2,
\end{align*}
as required.

We now prove line (\ref{f:lineargrowth15}).  Connections with quadrupole symmetry on these bundles are described in terms of a triple of functions $a_i : (0,\frac{\pi}{3}) \to \mathbb R$, $i = 1,2,3$.  The bundle on which the connection is defined is determined by the boundary data.  In particular, as per (\cite{SS} Definition 2.5, Lemma 2.6), we require that $a = (a_1,a_2,a_3)$ satisfies
\begin{align} \label{f:lineargrowth30}
\lim_{\theta \to 0} a\prs{\theta} = \prs{0,0,l}, \qquad \lim_{\theta \to \frac{\pi}{3}} a(\theta) = \prs{0,3,0},
\end{align}
and moreover each $a_i$ extends to $(-\ge, \frac{\pi}{3} + \ge)$ such that for all $\theta \in (-\ge,\ge)$,
\begin{gather} \label{f:lineargrowth40}
\begin{split}
a_1\prs{\theta} = a_2\prs{-\theta},& \qquad a_3 \prs{\theta} = a_3\prs{-\theta}\\
a_1\prs{\tfrac{\pi}{3} + \theta} = a_3(\tfrac{\pi}{3} - \theta),& \qquad a_2\prs{\tfrac{\pi}{3} + \theta} = a_2\prs{\tfrac{\pi}{3} - \theta}
\end{split}
\end{gather}
We can construct a test connection which satisfies these conditions as follows.  First set $a_1 \equiv 0$.  Fix some small $\gd > 0$ and define $a_2$ via
\begin{align*}
a_2(\theta) \equiv&\ 0 \qquad \mbox{for }  \theta \in (-\gd,\gd)\\
a_2(\theta) \equiv&\ 3 \qquad \mbox{for }  \theta \in \left(\tfrac{\pi}{3} - \gd, \tfrac{\pi}{3} + \gd \right)\\
0 \leq a_2(\theta) \leq&\ 3 \qquad \mbox{for } \theta \in [0,\tfrac{\pi}{3}]\\
0 \leq a_2'(\theta) \leq&\ 5 \qquad \mbox{for } \theta \in [0,\tfrac{\pi}{3}].
\end{align*}
and we define $a_3$ via
\begin{align*}
a_3(\theta) \equiv&\ 3 \qquad \mbox{for }  \theta \in (-\gd,\gd)\\
a_3(\theta) \equiv&\ 0 \qquad \mbox{for }  \theta \in \left(\tfrac{\pi}{3} - \gd, \tfrac{\pi}{3} + \gd \right)\\
0 \leq a_3(\theta) \leq&\ 3 \qquad \mbox{for } \theta \in [0,\tfrac{\pi}{3}]\\
0 \geq a_3'(\theta) \geq &\ -5 \qquad \mbox{for } \theta \in [0,\tfrac{\pi}{3}].
\end{align*}
One easily checks that this satisfies conditions (\ref{f:lineargrowth30}) and (\ref{f:lineargrowth40}) for $l = 3$.  Furthermore, if we set, for $l > 0$,
\begin{align*}
a_{l} := \left(a_1, a_2, \tfrac{l}{3} a_3 \right)
\end{align*}
then $a_{l}$ satisfies the conditions of (\ref{f:lineargrowth30}) and (\ref{f:lineargrowth40}) for the $(l,3)$ bundle,
and furthermore satisfies
\begin{align*}
0 \leq a_3(\theta) \leq l, \qquad 0 \geq a_3'(\theta) \geq -5 l.
\end{align*}

In (\cite{SS} Proposition 2.7) the Yang-Mills energy of these connections is computed, and takes the form
\begin{gather} \label{f:lineargrowth50}
\begin{split}
\brs{\brs{F_{\N(a)}}}_{L^2}^2 =&\ \pi^2 \int_0^{\tfrac{\pi}{3}} \left[ (a_1')^2 G_1 + (a_1 + a_2 a_3)^2/G_1 + (a'_2)^2 G_2 + (a_2 + a_1a_3)/G_2 \right.\\
&\ \left. \qquad \qquad + (a_3')^2 G_3 + (a_3 + a_1 a_2)^2/G_3 \right] \cd\theta,
\end{split}
\end{gather}
where
\begin{align*}
G_1 =&\ \tfrac{f_2 f_3}{f_1}, \qquad G_2 = \tfrac{f_3 f_1}{f_2}, \qquad G_3 = \tfrac{f_1 f_2}{f_3}\\
f_1\prs{\theta} =&\ 2 \sin \prs{\tfrac{\pi}{3} + \theta}, \qquad f_2 \prs{\theta} = 2 \sin \prs{\tfrac{\pi}{3} - \theta}, \qquad f_3\prs{\theta} = 2 \sin \prs{\theta}.
\end{align*}
Note that some terms in the energy formula involve factors of the $G_i$ which can blowup at one endpoint or the other, but the boundary conditions for $a$ ensure that these are finite integrals.  In particular, for our initial choice of $a = a_3$, we obtain some value for the Yang-Mills energy, call it $C$.  We furthermore observe that every term in (\ref{f:lineargrowth50}) is at worst quadratic in $a_3$ and $a_3'$, which both grow linearly with $l$, and hence it follows that there is a different constant $C$ such that
\begin{align*}
\brs{\brs{F_{\N(a_{l})}}}_{L^2}^2 \leq C l^2.
\end{align*}
As the Sadun--Segert connection is constructed by energy minimization within this symmetry class (\cite{SS} Proposition 3.4, Theorem 3.10), its energy must lie below that of this test connection, finishing the proof of (\ref{f:lineargrowth15}).
\end{proof}
\end{prop}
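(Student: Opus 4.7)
The plan is to bound $\imath(\N^l)$ from below by a topological index estimate and $\|F_{\N^l}\|_{L^2}^2$ from above by constructing an explicit test connection in the quadrupole symmetry class, so that the inequality $\imath(\N^l) \geq \gd \|F_{\N^l}\|_{L^2}^2$ follows by combining the two bounds. For the lower bound, the Sadun--Segert connections are non-self-dual by construction, so Taubes's index estimate (\cite{Taubes} Theorem 1.1) applies and gives $\imath(\N^l) \geq 2(|\gk(P_{(l,3)})| + 1)$. Combined with the standard charge computation $\gk(P_{(l,3)}) = \tfrac{1}{8}(l^2 - 9)$ from \cite{SS1, ASSS}, this yields $\imath(\N^l) \geq \tfrac{1}{4} l^2$ for all sufficiently large $l$, which is the easy half.

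For the upper bound, I would exploit the fact that $\N^l$ is characterized as an energy minimizer within the quadrupole symmetric ansatz (\cite{SS} Proposition 3.4, Theorem 3.10); it therefore suffices to exhibit one test connection in that class, with the correct boundary data, whose Yang--Mills energy is $O(l^2)$. Within the Sadun--Segert parametrization, such a connection corresponds to a triple $(a_1, a_2, a_3) : (0, \tfrac{\pi}{3}) \to \mathbb{R}$ satisfying the boundary data $a(0) = (0,0,l)$, $a(\tfrac{\pi}{3}) = (0,3,0)$ together with the $\mathbb{Z}_2$-reflection conditions of (\cite{SS} Definition 2.5, Lemma 2.6). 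I would set $a_1 \equiv 0$, take fixed smooth cutoff-style profiles $\tilde a_2, \tilde a_3$ that are identically $0$ or $3$ in small neighborhoods of the endpoints and interpolate smoothly in between with uniformly bounded derivatives, and then define the trial triple by $a_2 := \tilde a_2$, $a_3 := (l/3)\tilde a_3$; the reflection conditions reduce to conditions on $\tilde a_2, \tilde a_3$, which one builds in once and for all.

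Plugging this ansatz into the explicit energy formula of (\cite{SS} Proposition 2.7), each summand is at worst quadratic in the $a_i$ and their first derivatives, and only $a_3$ and $a_3'$ carry any $l$-dependence, both linearly in $l$; hence $\|F_{\N(a_l)}\|_{L^2}^2 \leq C l^2$ with $C$ independent of $l$. The main obstacle is integrability: the weights $G_i$ and $G_i^{-1}$ in the energy formula blow up at the endpoints $\theta = 0$ and $\theta = \tfrac{\pi}{3}$, so the coupled terms $(a_i + a_j a_k)^2 / G_i$ must vanish to sufficient order. This is precisely why I require $\tilde a_2, \tilde a_3$ to be locally constant near the endpoints: each coupling $a_i + a_j a_k$ then vanishes identically in a neighborhood of the singularity, making the singular terms zero rather than merely integrable. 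Combining the energy bound with the index bound from Step 1 yields $\imath(\N^l) \geq (4C)^{-1} \|F_{\N^l}\|_{L^2}^2$, so one may take $\gd = (4C)^{-1}$.
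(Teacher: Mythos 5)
Your proposal is correct and follows essentially the same route as the paper: Taubes's index estimate combined with the charge formula gives the lower bound $\imath(\N^l) \geq \tfrac{1}{4}l^2$, and the upper bound on energy comes from a test triple with $a_1 \equiv 0$, fixed profiles for $a_2, a_3$ constant near the endpoints, and a linear rescaling $a_3 \mapsto (l/3)a_3$, together with the minimality of the Sadun--Segert connection within the symmetry class. Your added remark that the coupling terms vanish identically (not merely integrably) near the endpoints is a correct and slightly cleaner justification of the integrability that the paper invokes more tersely.
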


\section{The Index of a positive Einstein Metric} \label{s:Einstein}

Let $X^4$ be a smooth, closed, four-dimensional manifold.  Furthermore suppose $g$ is a critical point for the normalized total scalar curvature functional given in \eqref{EH}:
\begin{align*}
\mathscr{S}[g] = \Vol(g)^{-1/2} \int_{X^4} \R_g \,\dV_g,
\end{align*}
where $R_g$ is the scalar curvature of $g$.  It follows that $g$ is an Einstein metric, whose Ricci tensor is given by
\begin{align*}
\Ric(g) = \tfrac{1}{4} R g
\end{align*}
(see \cite{Besse}, Chapter 4C).

To study the second variation of $\mathcal{S}$ at $g$, one uses the splitting of the space of sections of the bundle of symmetric two-tensors (see \cite{SchoenCIME} for details).  The stability operator, corresponding to transverse-traceless variations of $g$, is given by
\begin{align} \label{StabOp} \begin{split}
\prs{\mathcal{L}(h) }_{ij} &= \Delta h_{ij} + 2 R_{ik j \ell} h_{k \ell} \\
&= \Delta h_{ij} + 2 W_{ik j \ell} h_{k \ell} - \tfrac{1}{6}R h_{ij}.
\end{split}
\end{align}
This defines an index form
\begin{align} \label{DerLL} \begin{split}
I(h,h) &= \int_X \langle h, \mathcal{L} ( h) \rangle \,\dV \\
&= \int_X \big[ - |\nabla h|^2 + 2 W(h,h) - \tfrac{1}{6} R |h|^2 \big] \,\dV,
\end{split}
\end{align}
where
\begin{align*}
W(h,h) = W_{ik j \ell}  h_{k \ell} h_{ij}.
\end{align*}
The index $\imath(g)$ of an Einstein metric is the number of positive eigenvalues of $\mathcal{L}$ (equivalently, the number of negative eigenvalues of $-\mathcal{L}$).  The nullity $\nu(g)$ of an Einstein metric is the dimension of the kernel of $\mathcal{L}$, i.e., the dimension of the space of infinitesimal Einstein deformations (see Chapter 12 of \cite{Besse}) .  With this background we can give the proof of Theorem \ref{t:mainthm2}.

\begin{proof}[Proof of Theorem \ref{t:mainthm2}] Note that $\mathcal{L} : S_0^2(T^{*}X^4) \rightarrow S_0^2(T^{*}X^4)$, where $S_0^2(T^{*}X^4)$ is the bundle of trace-free symmetric two-tensors.  It follows from (\cite{Huisken}, Lemma 3.4), that\footnote{Note that in \cite{Huisken}, the norm of Weyl is the one induced by the metric on covariant $4$-tensors, while we are using the norm of Weyl viewed as a section of $\mbox{End}(\Lambda^2)$.}
\begin{align*}
- W(h,h) \geq -\tfrac{2}{\sqrt{3}} |W||h|^2.
\end{align*}
Therefore,
\begin{align*} \begin{split}
\int_X \langle h, - \mathcal{L} (h) \rangle \,\dV &\geq \int_X \big[ |\nabla h|^2 - \tfrac{4}{\sqrt{3}} |W||h|^2 + \tfrac{1}{6} R |h|^2 \big] \,\dV \\
&= \int_X \langle h, \big( -\Delta + \tfrac{1}{6}R - V \big)h \rangle \,\dV,
\end{split}
\end{align*}
where
\begin{align*}
V = \tfrac{4}{\sqrt{3}} |W|.
\end{align*}
Since $\dim(S_0^2(T^{*}X^4)) = 9$, applying Theorem \ref{NestProp} to the operator $\mathcal{N} =  -\Delta + \frac{1}{6}R - V$ gives
\begin{align} \label{INE}
\imath(g) + \nu(g) \leq 1728 e^2 \dfrac{ \int_X |W|^2 \,\dV } {\cY(X^4, [g])^2 }.
\end{align}
Since $g$ is Einstein,
\begin{align} \label{YS}
\cY(X^4, [g]) = \mathscr{S}[g].
\end{align}
Also, by the Chern--Gauss--Bonnet formula,
\begin{align*}
8 \pi^2 \Eul(X^4) &= \int_X \big( |W|^2 +  \tfrac{1}{24} R^2 \big) \,\dV \\
&= \int_X |W|^2 \,\dV + \tfrac{1}{24} \mathscr{S}[g]^2.
\end{align*}
Substituting this into (\ref{INE}), using (\ref{YS}), and rearranging the inequality gives
\begin{align*}
\mathscr{S}[g] \leq 24 \pi \sqrt{ \dfrac{ \Eul(X^4)}{ 3  +  \delta \brk{\imath(g) + \nu(g)} }},
\end{align*}
where $\delta = (24 e^2)^{-1}$, as required.
\end{proof}

\section{The proof of Theorem \ref{t:mainthm3}}

\begin{proof}[Proof of Theorem \ref{t:mainthm3}]

Let $(X^4,g)$ be an oriented four-manifold with positive scalar curvature.
To obtain the estimate for the first Betti number we only need to make minor changes to the index estimate for Yang-Mills connections, since the Jacobi operator in the case of the trivial bundle is the Hodge Laplacian acting on $\Lambda^1$.  The only difference is the choice of conformal representative: in the trivial case, we use a Yamabe metric in the conformal class of $g$ instead of the metric specified in Proposition \ref{CGF}.

Let $\mathcal{H}_1 : \Lambda^1 \rightarrow \Lambda^1$ denote the Hodge Laplacian.  Then by the Hodge-de Rham theorem, $H^1(X^4,\mathbb{R}) = \ker \mathcal{H}_1$, and $\dim \ker \mathcal{H}_1 = b_1(X^4)$.  Let $\omega \in H^1(X^4,\mathbb{R})$ be a harmonic one-form; by the classical Bochner formula,
\begin{align*}
\langle -\mathcal{H}_1 \omega, \omega \rangle_{L^2} &= \int_{X} \prs{ |\nabla \omega|^2 + \Ric(\omega,\omega) } \, \dV \\
&= \int_{X} \prs{ |\nabla \omega|^2 + \tfrac{1}{4}R |\omega|^2 + \cZ(\omega,\omega)  } \, \dV \\
&\geq \int_{X} \prs{ |\nabla \omega|^2 + \tfrac{1}{6}R |\omega|^2 + \cZ(\omega,\omega)  } \, \dV.
\end{align*}
Since $\cZ$ is trace-free,
\begin{align*}
\cZ(\omega,\omega) \geq -\tfrac{\sqrt{3}}{2}|\omega|^2.
\end{align*}
Therefore,
\begin{align*}
\langle -\mathcal{H}_1 \omega, \omega \rangle_{L^2} &\geq \int_{X} \prs{ |\nabla \omega|^2 + \tfrac{1}{12} R |\omega|^2 - \tfrac{\sqrt{3}}{2}|\cZ| |\omega|^2  } \, \dV \\
&= \big\langle \prs{ -\Delta + \tfrac{1}{6}R - V }\omega, \omega \big\rangle_{L^2},
\end{align*}
where
\begin{align*}
V = \tfrac{\sqrt{3}}{2}|\cZ|.
\end{align*}

Applying Theorem \ref{NestProp} to the operator $-\Delta + \frac{1}{6}R - V$ with $\mathcal{E} = \Lambda_1$, we get
\begin{align} \label{BP1}
b_1(X^4) &\leq \frac{108 e^2}{\cY(X^4,[g])^2} \int_{X} |\cZ|^2 \, \dV.
\end{align}
Recall
\begin{align*}
\rho_1(X^4,[g]) = \dfrac{ 4 \int \sigma_2(A_g) \, \dV}{\cY(X^4,[g])^2} = \dfrac{ \int_{X} \prs{ -\tfrac{1}{2}|\cZ|^2 + \tfrac{1}{24}R^2 }\, \dV}{\cY(X^4,[g])^2}.
\end{align*}
Since $g$ is a Yamabe metric,
\begin{align*}
\int_{X} R^2 \,\dV = \cY(X^4,[g])^2.
\end{align*}
Consequently,
\begin{align*}
\int_{X} |\cZ|^2 \, \dV &= - 2\rho_1(X^4,[g]) \cY(X^4,[g])^2 + \tfrac{1}{12} \int_{X} R^2 \, \dV \\
&= \tfrac{1}{12} \prs{ 1 - 24 \rho_1(X^4,[g]) } \cY(X^4,[g])^2.
\end{align*}
Substituting this into (\ref{BP1}) gives (\ref{b1}).

To estimate $b^{+}(X^4)$, let $\mathcal{H}_2 : H^2(X^4) \rightarrow H^2(X^4)$ denote the Hodge Laplacian.  Then $b^{+}(X^4) = \dim \ker \mathcal{H}_2^{+}$, where $\mathcal{H}^{+}_2$ is the restriction of $\mathcal{H}_2$ to $\Lambda_{+}^2$, the bundle of self-dual two-forms.  The space of self-dual harmonic two-forms is conformally invariant since the Hodge $\star$ operator is.  Therefore, in estimating $b^+(X^4)$ we are free to choose a conformal metric.  If we take the bundle $E$ to be the trivial bundle in Proposition \ref{CGF}, then there is a conformal metric $\hg \in [g]$ and a $t_0 \in (0,1]$ such that
\begin{align} \label{Rprop2}
R_{\hg} = 2\sqrt{6} t_0 |W^{+}_{\hg}|.
\end{align}
From now on we assume $g = \hg$.

The operator $\mathcal{H}^+_2$ satisfies the Weitzenbock formula
\begin{align*}
\mathcal{H}_2^{+} = \Delta + 2 W^{+} - \tfrac{1}{3}R,
\end{align*}
where $\Delta$ is the rough Laplacian.  Since $W^{+}: \Lambda^2_{+} \rightarrow \Lambda^2_{+}$ is trace-free and $\dim \Lambda^2_{+} = 3$, we have the sharp inequality
\begin{align*}
| W^{+}(\omega,\omega)| \leq \tfrac{2}{\sqrt{6}}|W^{+}| |\omega|^2.
\end{align*}
Therefore,
\begin{align*}
\langle -\mathcal{H}_2^{+} \omega, \omega \rangle_{L^2} &= \int_{X} \prs{ |\nabla \omega|^2 - 2 W^{+}(\omega,\omega) + \tfrac{1}{3}R|\omega|^2 } \, \dV \\
&\geq \int_{X} \prs{ |\nabla \omega|^2 - \tfrac{4}{\sqrt{6}} |W^{+}||\omega|^2 + \tfrac{1}{3}R|\omega|^2 } \, \dV \\
&= \int_{X} \prs{ |\nabla \omega|^2 + \tfrac{1}{6} R |\omega|^2 + \prs{ \tfrac{1}{6}R - \tfrac{4}{\sqrt{6}} |W^{+}| }|\omega|^2  } \, \dV.
\end{align*}
Using (\ref{Rprop2}),
\begin{align*}
\langle -\mathcal{H}_2^{+} \omega, \omega \rangle_{L^2} &\geq \int_{X} \prs{ |\nabla \omega|^2 + \tfrac{1}{6} R |\omega|^2 - \tfrac{\sqrt{6}}{3}(2 - t_0)  |W^{+}| |\omega|^2  } \, \dV \\
&\geq \big\langle \prs{ -\Delta + \tfrac{1}{6}R - V }\omega, \omega \big\rangle_{L^2},
\end{align*}
where
\begin{align*}
V = \tfrac{\sqrt{6}}{3}(2 - t_0)  |W^{+}|.
\end{align*}

Applying Theorem \ref{NestProp} to the operator $-\Delta + \frac{1}{6}R - V$ with $\mathcal{E} = \Lambda_2^{+}$, we get
\begin{align} \label{BP} \begin{split}
b+(X^4) &\leq \frac{72 e^2}{\cY(X^4,[g])^2} (2-t_0)^2 \| W^{+}\|_{L^2}^2 \\
&= 3 e^2 (2 - t_0)^2 \rho_{+}(X^4,[g]),
\end{split}
\end{align}
where $\rho_{+}$ is given by (\ref{rhodef}).  By (\ref{tb}) of Proposition \ref{CGF},
\begin{align*}
\rho_{+}^{-1/2} \leq t_0 \leq 1,
\end{align*}
hence
\begin{align*}
\prs{ 2 - t_0}^2 \rho_{+} \leq ( 2 - \rho_{+}^{-1/2} )^2 \rho_{+} \leq ( 2 \rho_{+}^{1/2} - 1 )^2.
\end{align*}
Substituting this into (\ref{BP}) gives (\ref{bplus}).

\end{proof}


\end{document}